\documentclass[reqno,11pt]{amsart}
\usepackage{amsmath,amssymb,latexsym,soul,cite,mathrsfs,accents}
\usepackage[dvipsnames]{xcolor}
\usepackage{color,enumitem,graphicx}
\usepackage[colorlinks=true,urlcolor=blue, citecolor=red,linkcolor=blue,
linktocpage,pdfpagelabels, bookmarksnumbered,bookmarksopen]{hyperref}
\usepackage[hyperpageref]{backref}
\usepackage[english]{babel}
\usepackage[left=2.6cm,right=2.6cm,top=2.9cm,bottom=2.9cm]{geometry}
\usepackage{tensor}
\newtheorem{theorem}{Theorem}
\newtheorem{lemma}[theorem]{Lemma}

\newtheorem{proposition}[theorem]{Proposition}
\newtheorem{remark}[theorem]{Remark}
\newtheorem{definition}[theorem]{Definition}

\newcommand{\innerthmname}{}

\theoremstyle{definition}

\makeatletter
\def\namedlabel#1#2{\begingroup
	#2%
	\def\@currentlabel{#2}%
	\phantomsection\label{#1}\endgroup
}

\def\XXint#1#2#3{{\setbox0=\hbox{$#1{#2#3}{\int}$ }
		\vcenter{\hbox{$#2#3$ }}\kern-.6\wd0}}

\newcommand*\owedge{\mathpalette\@owedge\relax}
\newcommand*\@owedge[1]{%
	\mathbin{%
		\ooalign{%
			$#1\m@th\bigcirc$\cr
			\hidewidth$#1\m@th\wedge$\hidewidth\cr
		}%
	}%
}
\makeatother

\newcommand{\ud}{\mathrm{d}}

\allowdisplaybreaks[3]

\title[Singular solutions of Hartree Equations]{Asymptotic behavior of solutions to a planar Hartree equation with isolated singularities}
\author[T. Feng]{Tao Feng}
\author[M. Yang]{Minbo Yang}
\author[X. Zhou]{Xianmei Zhou*}

\address[T. Feng]{School of Mathematical Sciences,
	Zhejiang Normal University
	\newline\indent
	321004, Jinhua-ZJ, People’s Republic of China}
\email{\href{mailto:fengtao@zjnu.edu.cn}{fengtao@zjnu.edu.cn}}

\address[M. Yang]{School of Mathematical Sciences,
	Zhejiang Normal University
	\newline\indent
	321004, Jinhua-ZJ, People’s Republic of China}
\email{\href{mailto:mbyang@zjnu.edu.cn}{mbyang@zjnu.edu.cn}}

\address[X. Zhou]{School of Mathematical Sciences,
	Zhejiang Normal University
	\newline\indent
	321004, Jinhua-ZJ, People’s Republic of China}
\email{\href{mailto:xmzhou@zjnu.edu.cn}{xmzhou@zjnu.edu.cn}}

\thanks{$^*$Minbo Yang was partially supported by the National Key Research and Development Program of China (No. 2022YFA1005700), National Natural Science Foundation of China (12471114) and Natural Science Foundation of Zhejiang Province (LZ22A010001).}
\thanks{*Corresponding author.}

\makeatletter
\@namedef{subjclassname@2020}{\textup{2020} Mathematics Subject Classification}
\makeatother

\subjclass[2020]{35A21; 35B40; 35J91}
\keywords{Hartree type equations, Isolated singularities, Asymptotic behavior}
\begin{document}
	
	\begin{abstract}
		In this paper we investigate the isolated singularities of the Hartree type equation
		\begin{equation*}
			-\Delta u (x)= \left(\frac{1}{|x|^\alpha}*e^u\right)e^{u(x)}\quad \text{in }   B_{1}\setminus\{0\} ,
		\end{equation*}
		where $\alpha>0$, $\displaystyle \frac{1}{|x|^\alpha}*e^u\triangleq\int_{B_{1} \setminus \{0\}}\frac{e^u(y)}{|x-y|^\alpha}dy$, and the punctured ball $B_{1}\setminus\{0\}\subset \mathbb{R}^2$. Under the finite total curvature condition, by establishing a representation formula for singular solutions, we obtain the asymptotic behavior of the solutions near the origin. We also extend this asymptotic behavior results to the case with a general non-negative coefficient $K(x)$, and to the higher-order Hartree-type equations in any dimension $n \geq 3$.
	\end{abstract}
	
	\maketitle

	\numberwithin{equation}{section}
	\numberwithin{theorem}{section}
	
	\section{Introduction and main results}\label{section1}
	 	In the present paper, we are interested in the following planar Hartree type equation with an isolated singularity at the origin:
	 	\begin{equation}\label{second-order Hartree in d2}
	 	-\Delta u (x)= \left(\frac{1}{|x|^\alpha}*e^u\right)e^{u(x)}\quad \text{in }   B_{1}\setminus\{0\} \subset \mathbb{R}^{2},
	 \end{equation}
	 	where $\alpha>0$, $B_{1}\setminus\{0\} $ is the punctured ball in $\mathbb{R}^2$ and the convolution $\displaystyle \frac{1}{|x|^\alpha}*e^u\triangleq\int_{B_{1} \setminus \{0\}}\frac{e^u(y)}{|x-y|^\alpha}dy$.   We mainly consider the singular solutions $u \in C^{2}(B_{1} \setminus \left\{0\right\})$ for nonlocal equation \eqref{second-order Hartree in d2} under the finite total curvature condition
	 	\begin{equation}\label{second-order assumption}
	 		\int_{B_{1}\setminus\{0\}}\left(\frac{1}{|x|^\alpha}*e^u\right)e^{u(x)} \ud x <+\infty.
	 	\end{equation}

	  \medskip 
Elliptic equations involving the exponential nonlinearity and/or the isolated singularity at the origin have been widely studied  in recent decades,  since it plays an essential role in various geometric and physical problems.
	  Let $(M,g)$ be a complete Riemannian manifold and $K$ be a given function on $M$. The Gaussian curvature problem asks whether there exists a metric $g_{1}$, conformal to $g$ on $M$, whose Gaussian curvature is $\frac{K}{2}$? Writing $g_{1}=e^{u}g$ and specializing to the case $M=\mathbb{R}^{2}$ with $g$ the standard Euclidean metric and $K\equiv1$, the problem above is equivalent to solving the following classical elliptic equation (called the prescribed scalar curvature equation)
	  \begin{equation}\label{Q1}
	  	-\Delta u =e^{u} \quad \text{in }\mathbb{R}^{2}.
	  \end{equation}
	  Using the moving planes method, all the $C^{2}$ smooth solutions of \eqref{Q1} satisfying the finite total curvature condition $\int_{\mathbb{R}^{2}} e^{u(x)} \ud x <\infty$ were classified by Chen-Li \cite{Chen-Li1, Chen-Li2}. There are also some classification results for global smooth solutions of the higher-order Q-curvature equation
	  \begin{equation}\label{higher-order Q-curvature equation}
	  	(-\Delta)^{\frac{n}{2}} u=e^{u} \quad \text{in } \mathbb{R}^{n} \ ( n\geq 3).
	  \end{equation}
	  For even dimensions  $n$, under the assumption $\int_{\mathbb{R}^{n}} e^{u(x)} \ud x <\infty$ and the decay condition $u(x)=o(|x|^{2})$ as $|x| \to +\infty$, Lin \cite{Lin},  Wei and Xu \cite{Wei-Xu2} established a complete classification of all  solutions $u\in C^{n}(\mathbb{R}^{n})$ of \eqref{higher-order Q-curvature equation} for $n=4$ and $n=2m$  $(m\geq2)$, respectively. For the general integer $n$, Chang and Yang \cite{Chang-Yang}  classified the $C^{n}$ smooth solutions of equation \eqref{higher-order Q-curvature equation} with the decay condition $u(x)=\ln\frac{2}{1+|x|^{2}}+w\left(\zeta(x)\right) $ as $|x| \to +\infty$, where $w$ is a smooth function defined on the unit $n$-sphere $\mathbb{S}^{n}$. Moreover,  Lin \cite{Lin}, Xu \cite{xu2006}, Martinazzi \cite{Martinazzi} and Hyder \cite{Hyder} obtained the asymptotic behavior of solutions to \eqref{higher-order Q-curvature equation} at infinity. 
	  
	  \medskip
	 It is worth noting that by using the Kelvin transform, the asymptotic behavior of solutions to \eqref{Q1} at infinity directly shows the asymptotic behavior near the singularity $0$  for the corresponding singular problem
	 \begin{equation*}
	 	-\Delta u =|x|^{-4}e^{u} \quad \text{in }\mathbb{R}^{2}\setminus\left\lbrace 0 \right\rbrace .
	 \end{equation*}
	 Consequently, it is necessary for us to focus on the  asymptotic behavior of singular elliptic solutions with isolated singularities.
	 Over the past few decades, several significant results on the asymptotic behavior of singular solutions with isolated singularities have received significant attention in the literature.   
	 Note that if $\alpha=0$, the equation \eqref{second-order Hartree in d2} is reduced to the conformal Gaussian curvature equation
	 \begin{equation}\label{Q-curvature equation in d2}
	 	-\Delta u =e^{u} \quad \text{in }   B_{1} \setminus \left\{0\right\} \subset \mathbb{R}^{2}
	 \end{equation}
  with the finite total curvature condition
 \begin{equation}\label{Q-curvature assumption}
 	\int_{B_{1}\setminus\left\{0\right\}}e^{u(x)} \ud x <+\infty.
 \end{equation}
Equation \eqref{Q-curvature equation in d2} arises in the  prescribed Gaussian curvature problem on a Riemannian surface with conical singularities  \cite{Chen-Li3}. 
	 The earliest investigation for local singular solutions  $u \in C^{2}(B_{1} \setminus \left\{0\right\})$ of equation \eqref{Q-curvature equation in d2} was conducted by Chou and Wan in \cite{Chou-Wan}. They showed that there exists a constant $b>-2$ such that
	 \begin{equation}\label{Q-curvature asymptotic behavior}
	 	u(x)=b\ln|x|+\mathcal{O}(1)\quad \text{as }|x|\to 0
	 \end{equation}
	 by applying complex-analytic techniques. As mentioned in \cite{Chou-Wan}, the integrability condition \eqref{Q-curvature assumption} is necessary for $u$ being asymptotically radial.  Using the asymptotic estimate \eqref{Q-curvature asymptotic behavior}, Guo, Wan and Yang \cite{Guo-Wan-Yang} later derived an asymptotic expansion up to arbitrary orders of $u$ near the origin.
	 
	 \medskip
	 There are also some results for the higher-order conformal Q-curvature equation
	 \begin{equation}\label{Q-curvature equation in higher-order case}
	 	(-\Delta)^{\frac{n}{2}}u=e^{u} \quad \text{in }B_{1} \setminus \left\{0\right\} \subset\mathbb{R}^{n}
	 \end{equation}
	 with the important condition \eqref{Q-curvature assumption}, where  $n \geq 3$ is an integer. When $n=4$, under an additional decay condition at the origin
	  \begin{equation}\label{decay}
	 	u(x)=o(|x|^{-2}) \quad \text{as}\ |x|\rightarrow 0,
	 \end{equation} Guo and Liu \cite{Guo-Liu} established the asymptotic behavior of local singular solutions $u \in C^{4}(B_{1} \setminus \left\{0\right\})$  of \eqref{Q-curvature equation in higher-order case} near the origin via ODE methods, refined subsequently to a complete asymptotic expansion in \cite{Guo-Liu-Wan} by using spherical averaging and ODE analysis. For the general case $n \geq 3$, under a mild decay assumption near the origin weaker than \eqref{decay}, that is 	$$\int_{B_{r} \setminus \left\{0\right\}} |u(x)| \ud x=o\left(r^{n-2}\right)\quad \text{as}\ r \to 0,$$ Yang and Yang \cite{Yang-Yang} developed a PDE method to study the local singular solutions of \eqref{Q-curvature equation in higher-order case}. Specifically,  for odd integer $n \geq 3$, they consider the equation \eqref{Q-curvature equation in higher-order case} in the distributional sense, while for even dimensions $n$, the solution $u$ belongs to $ C^{n}(B_{1} \setminus \left\{0\right\})$.  Based on the characterization of isolated singularities for the Poisson equation,   the authors in \cite{Yang-Yang} derived  a representation formula for the local singular solutions of \eqref{Q-curvature equation in higher-order case}, and subsequently established its asymptotic behavior. They also extended the asymptotic behavior results for both \eqref{Q-curvature equation in d2} and \eqref{Q-curvature equation in higher-order case} to the general isolated singularity problem
	 \begin{equation}\label{kx case}
	 	(-\Delta)^{\frac{n}{2}}u=K(x)e^{u} \quad \text{in }B_{1} \setminus \left\{0\right\} \subset\mathbb{R}^{n},
	 \end{equation} 
  where  $K(x) \in L^{\infty}(B_{1})$ is non-negative. 
	 
	 \medskip
	It is well-known that for  $ n\geq 3$, the higher-dimensional analogue of equations \eqref{Q1} and \eqref{higher-order Q-curvature equation} is the following higher-order Yamabe equation
	\begin{equation}\label{higher-order Yamabe equation}
		\left(-\Delta\right)^{\frac{m}{2}}u=u^{\frac{n+m}{n-m}} \quad \text{in }B_{1} \setminus \left\{0\right\} \subset \mathbb{R}^{n},
	\end{equation}
	where  $m \in (0,n)$ is an integer.
	 This equation is related to the problem of finding a conformal Riemannian metric on a conformally flat $n$-dimensional manifold with a prescribed curvature.  
	 When $m=2$, Caffarelli, Gidas and Spruck \cite{Carffarelli-Gidas-Spruck} used a ``measure theoretic" variation of the Alexandrov reflection technique to obtain the asymptotic symmetry of local singular positive solutions $u \in C^{2}(B_{1} \setminus \left\{0\right\})$ of second order Yamabe equation. Later, Li \cite{LiC} established the same result by employing a simplified moving plane method. Subsequent works refined the asymptotics using spectral properties of the linearized Fowler operator, see \cite{Korevaar-Mazzeo-Pacard-Schoen}.  When $m \geq 2$ is an even integer,  Jin and Xiong \cite{Jin-Xiong} recently studied the higher-order Yamabe equation the asymptotic radial symmetry of singular positive solutions  of \eqref{higher-order Yamabe equation},
	 by establishing the representation formula of solutions of \eqref{higher-order Yamabe equation}. 
	 Moreover, for the radial symmetry and  classification of global singular positive solutions $u \in C^{m}\left(\mathbb{R}^{n} \setminus \left\{0\right\}\right)$ to equation \eqref{higher-order Yamabe equation} in $\mathbb{R}^{n} \setminus \left\{0\right\}$, we refer readers to the works of Gidas, Ni and Nirenberg \cite{Gidas-Ni-Nirenberg},  Carffarelli, Gidas and Spruck \cite{Carffarelli-Gidas-Spruck}, Schoen \cite{Schoen}, Chen and Li \cite{Chen-Li1},  Chen, Li and Ou \cite{Chen-Li-Ou2}, Lin \cite{Lin}, Wei and Xu \cite{Wei-Xu2},   Li \cite{LiY} as well as Du and Yang \cite{Du-Yang2}. 
	 Regarding results for the higher-order Yamabe equation with more general singular sets, see
	 \cite{Du-Yang2, Huang-Li-Zhou, Chen-Lin1, Chen-Lin2, Zhang}. Additionally, constructions of Delaunay-type solutions of higher-order Yamabe equation are provided in \cite{Frank-Konig,Guo-Huang-Wang-Wei,Andrade-Wei}. For the Yamabe equation with a general  prescribed curvature function $K(x)$, we refer to
	 \cite{Chen-Lin2, Chen-Lin3, Lin2, Wei-Xu1, Zhang, Du-Yang3}.
	 
	 \medskip
	 The analysis of isolated singularities in critical Hartree-type equations is driven by motivations from both physical models and fundamental analytic inequalities, see \cite{Lieb2,Moroz-Penrose-Tod,Nazin}. A prototypical example is the following critical Hartree equation
	\begin{equation}\label{critical hartree equation}
		-\Delta u=(\mathcal{R}_\alpha\ast u^{\frac{2n-\alpha}{n-2}})u^{\frac{n+2-\alpha}{n-2}} \quad \text{in }\Omega\subset \mathbb{R}^{n},
	\end{equation}
	where $\mathcal{R}_\alpha(x)\ast u^{\frac{2n-\alpha}{n-2}}:=\int_{\Omega}\frac{u^{\frac{2n-\alpha}{n-2}}(y)}{|x-y|^\alpha}dy$ with $n\geqslant 3$ and $\alpha\in(0,n)$. If $\Omega=\mathbb{R}^{n}$, a complete classification of the global classical positive solutions to equation \eqref{critical hartree equation} has been established in the works of Miao et al. \cite{Miao-Wu-Xu}, Du and Yang \cite{Du-Yang1}, Gao et al. \cite{Gao-Yang}, and Le \cite{Le}, with related results also in \cite{Guo-Hu-Peng-Shuai}. 
	Recently, Andrade, Feng, Piccione, and Yang \cite{Andrade-Feng-Piccione-Yang} studied the singular solutions to \eqref{critical hartree equation},
	by establishing the integral representation of  positive solutions of \eqref{critical hartree equation}. On the one hand, when $\Omega= \mathbb{R}^{n} \setminus \left\{0\right\}$, the authors in \cite{Andrade-Feng-Piccione-Yang}  obtained the  radial symmetry of global positive solutions $u \in C^{2}\left(\mathbb{R}^{n} \setminus \left\{0\right\}\right) \cap L^{\frac{2n-\alpha}{n+2}}(\mathbb{R}^{n})$ of \eqref{critical hartree equation}. On the other hand,  when $\Omega= B_{r} \setminus \left\{0\right\}$ with $0<r<\infty$, they derive the asymptotic behavior of local positive solutions $u \in C^{2}\left(B_{r} \setminus \left\{0\right\}\right)\cap L^{\frac{2n-\alpha}{n+2}}(B_{r})$ of  \eqref{critical hartree equation}, that is,
	\begin{equation*}
		u(x)=\mathcal{O}(|x|^{\frac{2-n}{2}}) \quad \text{as } |x| \to 0.
	\end{equation*}
	and further obtained the asymptotic radial symmetry for the singluar solutions of \eqref{critical hartree equation} near the origin.
	
		All the results for \eqref{critical hartree equation} mentioned above concern the case $n \geq 3$. For the special case $n=2$, under some integrability conditions, there are some classification and asymptotic behavior results of global solutions $u \in C^{2}(\mathbb{R}^{n})$ of related Hartree-type equations with exponential nonlinearity, see \cite{Guo-Peng,Niu,Yang-Yu,Gluck}. However, to our knowledge, there are few results concerning the Hartree-type equation \eqref{second-order Hartree in d2} with an isolated singularity in dimension two. 

	\medskip
	It should be noted that, unlike the prescribed scalar curvature equation \eqref{Q1}, the right-hand side of equation \eqref{second-order Hartree in d2} is a nonlocal term. Consequently, ODE-based techniques in \cite{Guo-Liu} is extremely difficult to apply when analyzing the asymptotic behavior of singular solutions to \eqref{second-order Hartree in d2}. A natural question then arises: can the PDE-based approach be adapted to the study of \eqref{second-order Hartree in d2}? Moreover, does the asymptotic behavior \eqref{Q-curvature asymptotic behavior} remain valid for \eqref{second-order Hartree in d2}? Recalling that, in this paper, we require $\alpha>0$, and the finite total curvature condition \eqref{second-order assumption} holds.  Inspired by \cite{Yang-Yang}, we establish a representation formula for local singular solutions of \eqref{second-order Hartree in d2} and subsequently derive their asymptotic behavior at the origin. Our first main result is stated below.
	\begin{theorem}\label{secondorderresult}
		Suppose that $u\in C^{2}\left(B_{1}\setminus\{0\}\right)$ is a solution of \eqref{second-order Hartree in d2} and $\alpha>0$. If $u$ satisfies \eqref{second-order assumption}, then there exists a constant $b>-2$ such that
		\begin{equation*}
			u(x)=v(x)+h(x)+b\ln|x| \quad\text{for }x\in B_{1} \setminus\{0\},
		\end{equation*}
		where $h \in C^{\infty}(B_{1})$ is a solution of $-\Delta h =0$ in $B_{1}$ and $v$ is defined by 
		\begin{equation*}
			v(x)=\frac{1}{2\pi}\int_{B_{1}\setminus\left\{0\right\}}\ln\frac{5}{|x-y|}\left(\frac{1}{|y|^\alpha}*e^u\right)e^{u(y)}\ud y.
		\end{equation*}
		Moreover, if $\alpha \in \left(0,2\right)$ and $b > \frac{\alpha-4}{2}$, then there exists a H$\ddot{o}$lder continuous function $\varphi \in C_{loc}^{\gamma}(B_{1}) $  with $\gamma\in(0,1)$  such that
		\begin{equation}\label{u(x)=varphi+blnx}
			u(x)=\varphi + b\ln|x| \quad \text{near the origin}.
		\end{equation}
	\end{theorem}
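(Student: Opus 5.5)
Set $f(y)=\left(\frac{1}{|y|^\alpha}*e^u\right)e^{u(y)}\ge 0$. By \eqref{second-order assumption}, $f\in L^1(B_1)$. The plan follows the PDE approach of \cite{Yang-Yang}.

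\textbf{Step 1: the representation formula.} Since $|x-y|<2$ for $x,y\in B_1$, the kernel $\ln\frac{5}{|x-y|}$ is positive, so $v$ is well defined and $v\ge0$. Moreover $v\in L^1(B_1)$ and $-\Delta v=f$ in $\mathcal D'(B_1\setminus\{0\})$. Hence $w:=u-v$ is harmonic in $B_1\setminus\{0\}$. Because $-\Delta u=f\ge0$, $u$ is superharmonic. To apply the Bôcher-type classification of isolated singularities of harmonic functions, I need a one-sided bound on $w$ near $0$.

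\textbf{Step 2: a one-sided bound on $w$.} I use $\int_{B_1}e^{u}<\infty$, which follows from $f\ge (\inf_{B_1}|\cdot|^{-\alpha}*e^u)\,e^u\ge 2^{-\alpha}\|e^u\|_{L^1}\,e^u$ provided $u\not\equiv-\infty$. Jensen's inequality on small circles then gives $\bar u(r)\le C-2\ln r$ roughly, for the circular mean $\bar u$. Expanding the harmonic $w$ in Fourier modes, $w=a+b\ln|x|+\sum_{k\ne0}(c_kr^k+d_kr^{-k})e^{ik\theta}$, and combining the integrability of $e^{u}=e^{v+w}$ with $v\ge0$ excludes the negative powers $d_k$, as in \cite{Chou-Wan, Yang-Yang}. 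Concretely, $e^{w}\le e^{u}$ is integrable, and the Brezis–Merle/Harnack argument for harmonic functions with $e^w\in L^1$ of the punctured disc forces $w-b\ln|x|$ to extend harmonically. This yields $u=v+h+b\ln|x|$ with $h$ harmonic in $B_1$. Integrability of $e^{h+b\ln|x|}\le e^{u}$ near $0$ gives $b\ge-2$. To get the strict inequality: if $b=-2$, Jensen again gives $\int e^{u}\ge\int |x|^{-2}e^{h}e^{v}=\infty$, since $v\ge0$. So $b>-2$. This step is the main obstacle, namely ruling out essential singular modes for a harmonic function knowing only $L^1$ control of its exponential. I would argue mode by mode via the one-dimensional Jensen inequality on circles, which bounds $\frac{1}{2\pi}\int e^{w(r,\theta)}d\theta\ge e^{\bar w(r)}$, together with the growth of $|d_k|r^{-k}$ on arcs.

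\textbf{Step 3: Hölder regularity when $\alpha<2$ and $b>\frac{\alpha-4}2$.} The goal is to show $f\in L^p(B_\delta)$ for some $p>1$. Then $v\in W^{2,p}\subset C^\gamma$, and $\varphi=v+h$ gives \eqref{u(x)=varphi+blnx}. Near $0$ we have $e^{u}=|x|^be^{h}e^{v}$, so I first bound the convolution $\frac1{|x|^\alpha}*e^u$.
\begin{itemize}
\item Since $v\ge0$ is only an upper obstacle, I use Brezis–Merle: $f\in L^1$ gives $e^{qv}\in L^1_{loc}$ for some $q>1$ after splitting off a small-mass part. More precisely, take $\delta$ with $\|f\|_{L^1(B_\delta)}$ small; then $e^{v}\in L^{q}(B_\delta)$ for every $q<\infty$ achievable.
\item Then $e^{u}\in L^{s}$ near $0$ for any $s$ with $sb>-2$, via Hölder.
\item With $\alpha<2$, Hardy–Littlewood–Sobolev (or a direct estimate) gives $|x|^{-\alpha}*e^u\in L^\infty$ or $L^t$ for large $t$, since $e^u\in L^{s}$ with $s>1$ close to $\frac{2}{\alpha}$-duality, and $\alpha<2$ makes $|x|^{-\alpha}\in L^{r}$ for $r<2/\alpha$.
\item Finally $f\lesssim |x|^{b}e^{v}\cdot(\cdots)$. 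Here $b>\frac{\alpha-4}{2}$ is exactly what makes $|x|^{b}$ times the convolution, which behaves like $|x|^{\min(0,\,b+2-\alpha)}$, lie in $L^p$ for some $p>1$. Indeed $b+\min(0,b+2-\alpha)>-2$ iff $b>-2$ and $2b+2-\alpha>-2$.
\end{itemize}
I would carefully compute the convolution bound $\int_{B_1}|x-y|^{-\alpha}|y|^{b}dy\lesssim 1+|x|^{b+2-\alpha}$, which holds since $b>-2$ and $\alpha<2$.
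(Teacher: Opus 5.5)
Your proposal is correct, but it reaches the representation formula by a different route from the paper.

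\textbf{Comparison of routes.} The paper first uses $u^+\le e^u$ to get $u^+\in L^1(B_1)$. It then invokes the Yang--Yang representation theorem to write $u=v+h+\sum_{|\beta|\le1}a_\beta D^\beta\phi$, so the singularity is of finite order. Finally it removes the dipole coefficients by integrating $e^{\sum a_\beta D^\beta\phi}$ over the four sectors $D_1,\dots,D_4$.

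You instead use $v\ge0$ to get $e^{w}\le e^{u}\in L^1(B_1)$ for the harmonic function $w=u-v$ on the punctured disc, and then classify such $w$ directly by its Fourier/Laurent expansion. This is self-contained and deals with possibly infinitely many singular modes at once. It is, however, specific to two dimensions. The paper's route goes through the structure of distributions supported at $0$, and it carries over to $n\ge3$ once the decay condition removes the derivatives of order $\ge2$.

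In the H\"older part you replace the paper's HLS case split ($b\ge0$, $\alpha-2\le b<0$, $\frac{\alpha-4}{2}<b<\alpha-2$) by the weighted Riesz-potential bound $\int|x-y|^{-\alpha}|y|^b\,dy\lesssim 1+|x|^{b+2-\alpha}$. This makes it transparent why the threshold is $b+\min(0,b+2-\alpha)>-2$. The rest of that part is the same as in the paper: Brezis--Merle for $e^v$, then $f\in L^{p_0}$, then $W^{2,p_0}\subset C^\gamma$.

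\textbf{Excluding the negative modes.} Jensen's inequality with the uniform measure on circles only sees the mean $\bar w(r)=a+b\ln r$. The ``growth on arcs'' argument only controls a highest nonzero mode. Together, as stated, they do not exclude infinitely many $d_k$, which is exactly the essential-singularity case you flag. Either of the following fixes works.
\begin{itemize}
\item Apply Jensen against the probability measure $(1+\cos(k\theta-\theta_k))\frac{d\theta}{2\pi}$ with $\theta_k=-\arg d_k$. In your normalization (with $d_{-k}=\overline{d_k}$) this gives $\int_0^{2\pi}e^{w}\,d\theta\ge\pi\exp\bigl(a+b\ln r+|d_k|r^{-k}-|c_k|r^{k}\bigr)$, which is not integrable against $r\,dr$ unless $d_k=0$.
\item Write $w=b\ln|z|+\mathrm{Re}\,g$ with $g$ single-valued and holomorphic on the punctured disc. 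Then $|z|^b|e^{g}|\in L^1$ bounds the Laurent coefficients of $e^g$, so $e^g$ has at most a pole. Because $g$ is single-valued, the winding number of $e^g$ around $0$ is zero, so the order is zero and $g$ extends.
\end{itemize}
Also, $b>-2$ follows directly from $\int_{B_\varepsilon}|x|^be^{h}\,dx\le\int e^u<\infty$, so the case $b=-2$ needs no separate treatment.

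\textbf{The convolution estimate.} The convolution $|x|^{-\alpha}*e^u$ contains the unbounded factor $e^{v(y)}$, so your pointwise estimate has to be combined with H\"older's inequality.
\begin{itemize}
\item For $|x|<\delta/2$, the part of the integral over $|y|\ge\delta$ is at most $C_\delta\|e^u\|_{L^1}$. This matters because $h$ need not be bounded near $\partial B_1$.
\item On $B_\delta$, use $e^v\in L^q(B_\delta)$ with $q$ large. This gives $|x|^{-\alpha}*e^u\lesssim 1+|x|^{b-\alpha+2/q'}$.
\end{itemize}
Since $b>\frac{\alpha-4}{2}$ is a strict inequality, the small loss from $q'>1$ is absorbed as $q'\downarrow1$.
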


	More generally, we will consider the following Hartree type equations in higher-order case
	\begin{equation}\label{higher-order Hartree}
		(-\Delta)^{\frac{n}{2}} u (x)= \left(\frac{1}{|x|^\alpha}*e^u\right)e^{u(x)}\quad \text{in }   B_{1}\setminus\{0\} \subset \mathbb{R}^{n},
	\end{equation}
	where $n\geq3$ is an integer. Obviously, equation \eqref{higher-order Hartree} is a polyharmonic equation in even dimensions and a higher-order fractional equation in odd dimensions. 
	
	\medskip
	Based on the analysis in the proof of Theorem \ref{secondorderresult}, we can again obtain a representation formula for the  singular solutions of \eqref{higher-order Hartree}. However, compared with the two dimensional case ($n=2$), for the higher-order Hartree-type equations in  dimensions $n\geq3$, the following additional decay conditions on the solutions of \eqref{higher-order Hartree} are required to eliminate the higher-order derivative terms:
	\begin{equation}\label{o(r^{n-2})}
		\int_{B_{r} \setminus \left\{0\right\}} |u(x)| \ud x=o\left(r^{n-2}\right) \quad \text{as }r \to 0.
	\end{equation} 

	We first consider the even-dimensional case.  In this case, a fundamental solution of $(-\Delta)^{\frac{n}{2}}$ is given by $\phi(x) = c_{n}\ln\frac{5}{|x|}$, where $c_{n}>0$ is a constant.  The second result of this paper is 
	\begin{theorem}\label{even result case}
		Suppose that $n \geq 4$ is an even integer and $\alpha>0$. Let $u \in C^{n}\left(B_{1}\setminus\{0\}\right)$ be a solution of \eqref{higher-order Hartree}. If $u$ satisfies \eqref{second-order assumption} and the decay condition \eqref{o(r^{n-2})}, then there exists a constant $b>-n$ such that
		\begin{equation*}
			u(x)=v(x)+h(x)+b\ln|x| \quad\text{for }x\in B_{1} \setminus\{0\},
		\end{equation*}
		where $h \in C^{\infty}(B_{1})$ is a solution of $(-\Delta)^{\frac{n}{2}} h =0$ in $B_{1}$ and $v$ is defined by 
		\begin{equation}\label{v higherorder}
			v(x)=c_{n} \int_{B_{1}\setminus\left\{0\right\}}\ln\frac{5}{|x-y|}\left(\frac{1}{|y|^\alpha}*e^u\right)e^{u(y)}\ud y, 
		\end{equation}
		Moreover, if $\alpha \in \left(0,n\right)$ and $b > \frac{\alpha-2n}{2}$, then there exists a H$\ddot{o}$lder continuous function $\varphi \in C_{loc}^{\gamma}(B_{1}) $  with $\gamma\in(0,1)$ such that
		\begin{equation}\label{asym}
			u(x)=\varphi(x) + b\ln|x| \quad \text{near the origin}.
		\end{equation}
	\end{theorem}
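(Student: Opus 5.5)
Write $f(x)=\big(\frac{1}{|x|^\alpha}*e^u\big)e^{u(x)}\ge 0$; by \eqref{second-order assumption}, $f\in L^1(B_1)$. The plan mirrors the proof of Theorem \ref{secondorderresult}: build the potential $v$, identify the remainder as a polyharmonic function with an isolated singularity, remove all singular terms except $b\ln|x|$, and finally bootstrap integrability near $0$.

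\textbf{Step 1 (the potential).} Since $|x-y|<2$ in $B_1$, we have $\ln\frac{5}{|x-y|}>0$. Hence $v$ in \eqref{v higherorder} is well defined, nonnegative, and lies in $L^1(B_1)$ by Fubini. It satisfies $(-\Delta)^{\frac n2}v=f$ in $\mathcal D'(B_1)$, because $c_n\ln\frac{5}{|x|}$ is a fundamental solution.

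\textbf{Step 2 (the remainder).} Set $w=u-v$. Then $(-\Delta)^{\frac n2}w=0$ in $\mathcal D'(B_1\setminus\{0\})$, so $(-\Delta)^{\frac n2}w$ is a distribution supported at the origin:
\[
(-\Delta)^{\frac n2}w=\sum_{|\beta|\le N}a_\beta D^\beta\delta_0 .
\]
This needs $w\in L^1_{loc}(B_1)$, which follows from \eqref{o(r^{n-2})} and $v\in L^1$. Consequently
\[
w=h+\sum_\beta a_\beta D^\beta\Phi, \qquad \Phi=c_n\ln\frac{5}{|x|},
\]
with $h$ polyharmonic and smooth in $B_1$.

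The terms with $|\beta|\ge1$ behave like $|x|^{-|\beta|}$, with nonzero spherical components. Condition \eqref{o(r^{n-2})}, together with $\int_{B_r}|v|=O(r^n|\ln r|)$, should kill them. I would test against suitable cutoffs $\eta(x/r)$ times polynomials, which isolates each $a_\beta$ as $r\to0$.

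Here the fundamental solution in even dimension has only a log term. However, derivatives $D^\beta\Phi$ with $|\beta|$ up to $n-2$ are singular like $|x|^{-|\beta|}$, whose integral over $B_r$ is $r^{n-|\beta|}\ge r^2$. So \eqref{o(r^{n-2})} does not trivially exclude all of them. I expect the real mechanism to be different: the kernel of $(-\Delta)^{n/2}$ on the punctured ball includes $|x|^{2k}\ln|x|$ and $|x|^{-j}$-type terms. Following Yang--Yang, one writes $\Delta^{k}w$ and uses Bôcher-type results iteratively, and \eqref{o(r^{n-2})} is exactly what excludes the $|x|^{2-n}$-type terms. This is the main obstacle. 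I would handle it by averaging over spheres and comparing with $\int_{B_r}|u|=o(r^{n-2})$, invoking the known classification of isolated singularities of polyharmonic distributions. The surviving term is $b\ln|x|$ with $b=-a_0c_n$.

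\textbf{Step 3 ($b>-n$).} We have $v\ge0$ and $h$ bounded near $0$. Moreover $f\ge e^{u}\int_{B_{1/2}\setminus B_{1/4}}e^{u}\,dy\cdot c>0$, with a positive constant since $u$ is continuous away from $0$. So $\int e^{u}<\infty$ near $0$, and
\[
e^u\ge c|x|^{b}e^{v}\ge c|x|^b .
\]
Integrability of $|x|^b$ in $\mathbb R^n$ forces $b>-n$.

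\textbf{Step 4 (Hölder regularity for $\alpha\in(0,n)$ and $b>\frac{\alpha-2n}{2}$).} It suffices to show $f\in L^p(B_\rho)$ for some $p>1$. Then $v$ is Hölder continuous, since the log kernel convolved with $L^p$ gives $C^{\gamma}$ when $p>1$ in any dimension, and we take $\varphi=v+h$.

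First, $e^{v}\in L^q_{loc}$ for all $q$: by a Brezis--Merle-type inequality, after splitting $f=f\chi_{B_\epsilon}+f\chi_{B_\epsilon^c}$ with $\|f\chi_{B_\epsilon}\|_{L^1}$ small, Jensen gives $e^{qv}\in L^1$. Therefore $e^u\le C|x|^be^{v}\in L^{s}$ for every $s<\frac{n}{-b}$ (all $s$ if $b\ge0$).

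Next, by HLS or Hardy--Littlewood--Sobolev/Young estimates, $\frac{1}{|x|^\alpha}*e^u\in L^t$ with $\frac1t=\frac1s-\frac{n-\alpha}{n}$, or it is bounded when $s>\frac{n}{n-\alpha}$. Hölder then yields $f\in L^p$ with $\frac1p=\frac2s-\frac{n-\alpha}{n}$. This exponent is $<1$ for $s$ close to $\frac{n}{-b}$ exactly when $\frac{-2b}{n}<1+\frac{n-\alpha}{n}$, i.e. $b>\frac{\alpha-2n}{2}$, which is the hypothesis. This completes \eqref{asym}.
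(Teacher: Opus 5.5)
Your Step 2 has a genuine gap. You claim that \eqref{o(r^{n-2})}, together with $\int_{B_r}v\le Cr^n\ln\frac1r$, kills every term with $|\beta|\ge1$. This is false for $|\beta|=1$. The dipole part $\sum_i a_i\partial_i\Phi=-c_n\,a\cdot x/|x|^2$, with $a=(a_1,\dots,a_n)$, satisfies $\int_{B_r}|x|^{-1}\,dx\sim r^{n-1}=o(r^{n-2})$, so the decay condition cannot see it. Your test against $\eta(x/r)P(x)$ with $\deg P=k$ also fails here: it produces $r^{n-1+k}$ against an upper bound $o(r^{n-2+k})$, which is no contradiction. These terms can only be removed with the finite total curvature condition, and your proposal never does this. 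Step 3 already writes $e^u\ge c|x|^be^v$, so it assumes $u=v+h+b\ln|x|$. The missing argument is Step 2 of Lemma~\ref{even case expression}, parallel to Lemma~\ref{lemma2.2}. Since $v\ge0$, $e^h$ is bounded above and below on $B_{1/2}$, and $|x-y|^{-\alpha}\ge1$ for $x,y\in B_{1/2}$, \eqref{second-order assumption} yields
\[
\int_{B_{1/2}\setminus\{0\}}\Big(\frac{5}{|x|}\Big)^{c_na_0}e^{-c_n a\cdot x/|x|^2}\,dx<\infty .
\]
If $a\neq0$, then on the cone $\{x:\,-a\cdot x\ge\frac12|a|\,|x|\}$ the exponent is at least $\frac{c_n|a|}{2|x|}$. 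Since $\int_0^{1/2}\rho^{m}e^{c/\rho}\,d\rho=\infty$ for every $m$, this is a contradiction. Only once $a=0$ does the same integral give $c_na_0<n$, i.e.\ $b>-n$. Your Step 3 has the right ingredient ($\int_{B_{1/4}}e^u<\infty$), but you must apply it to $e^{a_0\Phi+\sum_i a_i\partial_i\Phi}$ before you know the dipole terms vanish.

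Conversely, the obstacle you flag for $2\le|\beta|\le n-2$ does not exist. It comes from comparing with the wrong power, and the inequality $r^{n-|\beta|}\ge r^2$ is reversed for $r<1$. The right comparison is with $r^{n-2}$, and $r^{n-j}\ge r^{n-2}$ exactly when $j\ge2$. So \eqref{o(r^{n-2})} does eliminate all orders $2\le j\le n-1$. The paper does this by peeling from the top. Write the order-$j$ part as $P_j(x)/|x|^{2j}$ and choose a cone where $|P_j(\theta)|\ge d_0>0$. Its integral over that cone intersected with $B_r$ is then $\ge d_1r^{n-j}$, while all lower orders contribute $O(r^{n-j+1})$; this contradicts $o(r^{n-2})$, and you then descend in $j$. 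Your fallback via spherical averages and iterated B\^ocher arguments is unnecessary, and would not work by itself anyway, since spherical means annihilate non-radial terms such as $\partial_{12}\Phi$.

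The rest is fine. Getting the representation from $w\in L^1_{loc}$ and the Schwartz structure theorem is a legitimate alternative to the paper's route, which derives $u^+\in L^1$ from $u^+\le e^u$ and cites Yang--Yang. Step 4 is the paper's HLS--H\"older argument with cleaner bookkeeping: $\frac1p=\frac2s-\frac{n-\alpha}{n}$ gives exactly $b>\frac{\alpha-2n}{2}$. You should, however, split off the part of the convolution over $B_1\setminus B_{\rho_1}$ as the paper does, since $e^u\le C|x|^be^v$ only holds on compact subsets of $B_1$.
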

	Next, we consider the odd-dimensional case. Notice that when $n\geq3$ is an odd integer, $(-\Delta)^{\frac{n}{2}}$ is a nonlocal integral operator. Therefore, in this dimension, we study equation \eqref{higher-order Hartree} in the distributional sense. 
	\begin{definition}
		Let $n\geq3$ be an odd integer and $\alpha>0$. We say that $u \in C^{n}\left(B_{1} \setminus \left\{0\right\}\right)$ is a solution of \eqref{higher-order Hartree}, if the following conditions hold:\\
		(i) The function $u$ is an element of $\mathcal{L}_{\frac{n}{2}}(\mathbb{R}^{n}):=\left\{u\in L^{1}_{loc}(\mathbb{R}^{n}): \int_{\mathbb{R}^{n}}\frac{|u(x)|}{1+|x|^{2n}} \ud x <\infty\right\}$;\\
		(ii) $\left(\frac{1}{|x|^\alpha}*e^u\right)e^{u(x)} \in L^{1}_{loc}(B_{1}\setminus\left\{0\right\})$;\\
		(iii) For every test function $\varphi \in C^{\infty}_{c}(B_{1}\setminus\left\{0\right\})$, the following integral equality holds:
		\begin{equation*}
			\int_{\mathbb{R}^{n}} u(x)\left(-\Delta\right)^{\frac{n}{2}} \varphi(x) \ud x = \int_{B_{1}\setminus\left\{0\right\}} \left(\frac{1}{|x|^\alpha}*e^u\right)e^{u(x)} \varphi(x) \ud x.
		\end{equation*}
	\end{definition}
	Then we have the following result.
	\begin{theorem}\label{odd case result}
		Suppose that $n \geq3$ is an odd integer and $\alpha>0$. Let $u \in \mathcal{L}_{\frac{n}{2}}(\mathbb{R}^{n})\cap C^{n} (B_{1} \setminus \left\{0\right\})$ be a solution of \eqref{higher-order Hartree}. If $u$ satisfies \eqref{second-order assumption} and \eqref{o(r^{n-2})}, then there exists a constant $b>-n$ such that
		\begin{equation*}
			u(x)=v(x)+h(x)+b\ln|x| \quad\text{for }x\in B_{1} \setminus\{0\},
		\end{equation*}
		where $v$ is defined by \eqref{v higherorder} and $h \in \mathcal{L}_{\frac{n}{2}} (\mathbb{R}^{n})\cap C^{\infty}(B_{1})$ is a solution of $(-\Delta)^{\frac{n}{2}} h =0$ in $B_{1}$. Moreover, if $\alpha \in \left(0,n\right)$ and $b > \frac{\alpha-2n}{2}$, then there exists a H$\ddot{o}$lder continuous function $\varphi \in C_{loc}^{\gamma}(B_{1})$ with $\gamma\in(0,1)$ such that
		\begin{equation}\label{varphi}
			u(x)=\varphi(x) + b\ln|x| \quad \text{near the origin}.
		\end{equation}	
	\end{theorem}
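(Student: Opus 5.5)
We argue as for Theorem \ref{even result case}, replacing the local polyharmonic structure by the distributional definition. Write $f(x)=\left(\frac{1}{|x|^\alpha}*e^u\right)e^{u(x)}\ge 0$. By \eqref{second-order assumption}, $f\in L^1(B_1)$. First we check that $v$ in \eqref{v higherorder} is well defined, belongs to $L^1_{loc}$, and satisfies $(-\Delta)^{\frac n2}v=f$ in $\mathcal D'(B_1)$. This holds because $c_n\ln\frac{5}{|x|}$ is a fundamental solution of $(-\Delta)^{\frac n2}$ in $\mathbb R^n$ in the distributional sense for the operator on $\mathcal L_{\frac n2}$, and $\ln\frac{5}{|x-y|}>0$ for $x,y\in B_1$. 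Since $v$ grows only logarithmically at infinity, $v\in\mathcal L_{\frac n2}(\mathbb R^n)$.

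Next set $w=u-v$. Then $w\in\mathcal L_{\frac n2}(\mathbb R^n)$ and $(-\Delta)^{\frac n2}w=0$ in $\mathcal D'(B_1\setminus\{0\})$. So $(-\Delta)^{\frac n2}w$, viewed as a distribution on $B_1$, is supported at the origin, and hence equals $\sum_{|\beta|\le N}a_\beta\partial^\beta\delta_0$. Define
\[
h=w-\sum_{\beta}a_\beta c_n\,\partial^\beta\ln\frac{5}{|x|}.
\]
Then $h$ is $\frac n2$-harmonic in $B_1$, so $h\in C^\infty(B_1)$ by regularity for the fractional Laplacian on $\mathcal L_{\frac n2}$.

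To kill the terms with $|\beta|\ge1$, we use \eqref{o(r^{n-2})}. Since $f\ge0$ is integrable, $\int_{B_r}|v|=O(r^n|\ln r|)$, and $h$ is bounded near $0$. A derivative $\partial^\beta\ln|x|$ with $|\beta|\ge1$ is homogeneous of degree $-|\beta|$, so its $L^1(B_r)$ norm is comparable to $r^{n-|\beta|}$ unless the combination cancels. Testing against dilated test functions $\psi(x/r)$, one shows every coefficient with $1\le|\beta|$ must vanish. The condition $o(r^{n-2})$ is exactly strong enough to rule out $|\beta|\le 2$, and larger orders are excluded even more easily. One subtlety for odd $n$: the fundamental solution is nonlocal, so $\partial^\beta\ln|x|$ is genuinely homogeneous and this scaling argument applies. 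This leaves $u=v+h+b\ln|x|$ with $b=-c_n^{-1}a_0$ up to normalisation.

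To show $b>-n$, suppose $b\le -n$. The convolution factor $\frac{1}{|x|^\alpha}*e^u$ is bounded below by a positive constant near $0$: $u$ is bounded below on some annulus by continuity, so $\int_{\text{annulus}}e^u|x-y|^{-\alpha}\,dy\ge c>0$. Also $v\ge 0$ and $h$ is bounded, so $e^{u}\ge C|x|^{b}$ near $0$. Then $f\ge c|x|^{b}\notin L^1$, contradicting \eqref{second-order assumption}.

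For the Hölder statement, with $\alpha\in(0,n)$ and $b>\frac{\alpha-2n}{2}$, it suffices to show $f\in L^p(B_{1/2})$ for some $p>1$. Then $v$ is Hölder continuous by the estimate $\int|\ln|x-y|-\ln|z-y||\,|f(y)|\,dy\le C|x-z|^\gamma\|f\|_{L^p}$, and we take $\varphi=v+h$. The main obstacle is a bootstrap, because $f$ involves $e^v$, which we do not yet control pointwise.
\begin{itemize}
\item Brezis--Merle type: $e^{pv}$ is locally integrable for $p$ small when $\|f\|_{L^1(B_\rho)}$ is small, and we can arrange this by shrinking $\rho$ around $0$.
\item This gives $e^u\le C|x|^b e^{v}\in L^{q}$ near $0$, for some $q$ whose size depends on $b$.
\item Hardy--Littlewood--Sobolev then puts $|x|^{-\alpha}*e^u$ in $L^s$, or in $L^\infty$ when $q>n/(n-\alpha)$.
\item By Hölder, $f\in L^p$ for some $p>1$ provided $2b+\alpha>-2n$ allows the exponents to be chosen; this is exactly the hypothesis $b>\frac{\alpha-2n}{2}$.
\item Iterate to improve integrability of $e^v$ until it is bounded, which gives Hölder continuity of $v$.
\end{itemize}
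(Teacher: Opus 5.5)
Your overall route is the paper's: Schwartz's theorem for $(-\Delta)^{\frac n2}(u-v)$, removal of the multipole terms, then $f\in L^{p_0}$ via exponential integrability of $v$, HLS and H\"older. Your exponent count leading to $b>\frac{\alpha-2n}{2}$ is also right.

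\textbf{The gap.} You claim that \eqref{o(r^{n-2})} kills every term with $|\beta|\ge 1$. It kills the terms with $|\beta|\ge 2$, but not the first-order (dipole) terms. With $a=(a_1,\dots,a_n)$ one has $\sum_{|\beta|=1}a_\beta D^\beta\phi=-c_n\frac{a\cdot x}{|x|^2}$ and
\[
\int_{B_r}\frac{|a\cdot x|}{|x|^2}\,dx\asymp |a|\,r^{n-1}=o(r^{n-2}),
\]
while $v$, $h$ and $a_0\phi$ contribute $O(r^n\ln\frac1r)$. So $u=v+h+a_0\phi-c_n\frac{a\cdot x}{|x|^2}$ satisfies \eqref{o(r^{n-2})} for every $a\in\mathbb{R}^n$. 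Testing against $\psi(x/r)$ does not help either, since by homogeneity the dipole contributes exactly $r^{n-1}\int D^\beta\phi\,\psi$. Consequently your proof of $b>-n$, which starts from $e^u\ge C|x|^b$, assumes something not yet proved.

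\textbf{The missing idea.} The paper removes the dipole using the finite total curvature condition, not the decay condition (Step 2 of Lemma~\ref{even case expression}, reused in the odd case). Since $|x-y|^{-\alpha}\ge 2^{-\alpha}$ on $B_1\times B_1$, \eqref{second-order assumption} gives $\bigl(\int_{B_1}e^u\bigr)^2\le 2^\alpha\int_{B_1}f<\infty$. Together with $v\ge 0$ and $h$ bounded near $0$, this yields
\[
\int_{B_{1/2}}|x|^{-c_na_0}\,e^{-c_n\frac{a\cdot x}{|x|^2}}\,dx<\infty .
\]
If $a\neq 0$, then on the cone $\{a\cdot x\le -\frac12|a||x|\}$ the integrand is at least $|x|^{-c_na_0}e^{c_n|a|/(2|x|)}$. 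This is not integrable at $0$ for any $a_0$, so $a=0$. The same integral then forces $c_na_0<n$, i.e.\ $b=-c_na_0>-n$ (note $b=-c_na_0$, not $-c_n^{-1}a_0$). Your argument for $b>-n$ becomes correct once you keep the dipole in the lower bound $e^u\ge C\,e^{a_0\phi-c_n a\cdot x/|x|^2}$ in this way.

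\textbf{Minor point.} Your last bullet, iterating until $e^v$ is bounded, is unnecessary. As you note yourself, $f\in L^{p_0}$ with $p_0>1$ already gives H\"older continuity of the logarithmic potential.
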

	\begin{remark}[Asymptotic behavior]\label{remark asymptotic behavior}
		 For $\alpha \in \left(0,n\right)$ and $b > \frac{\alpha-2n}{2}$, we can describe the asymptotic behavior of the solutions to \eqref{second-order Hartree in d2} and \eqref{higher-order Hartree} at the origin. In fact, 
		by \eqref{varphi} and $\varphi \in C^{\gamma}_{loc} (B_{1})$ for some $\gamma\in(0,1)$, there exist constants $C_{1}$ and $C_{2}$ such that
		\begin{equation}\label{asymptotic}
			C_{1}+b\ln|x|\leq u(x) \leq C_{2} +b\ln|x|  \quad \text{near the origin}.
		\end{equation}
	\end{remark}
	\begin{remark}[Valid range of $b$]\label{Remark b}
		In Theorem~\ref{secondorderresult}, \ref{even result case} and \ref{odd case result}, for   $\alpha \in (0,n)$ and $n\geq2$, the solution exhibits the asymptotic behavior given in \eqref{u(x)=varphi+blnx}, \eqref{asym} and \eqref{varphi} near the origin only when
		\begin{equation*}
			b>\frac{\alpha-2n}{2}.
		\end{equation*} 
	 To be more precise, assume that there holds $u(x) =\varphi(x) +b\ln|x|$ near the origin with $\varphi \in C^{\gamma}_{loc} (B_{1})$ for some $0<\gamma<1$, then \eqref{asymptotic} also holds. Hence, for sufficiently small $r>0$, we have
		\begin{align*}
			\int_{B_{1}\setminus\{0\}}\left(\frac{1}{|x|^\alpha}*e^u\right)e^{u(x)} \ud x \geq C\int_{B_{r}}\left(\int_{B_{r}}\frac{|y|^{b}}{r^{\alpha}} \ud y\right) |x|^{b} \ud x,
		\end{align*}
		which implies the finite total curvature condition \eqref{second-order assumption} fails whenever $b \leq \frac{\alpha-2n}{2}$. Therefore, it must have $b >\frac{\alpha-2n}{2}$.
		 
	\end{remark}
	\begin{remark}[General coefficient $K(x)$]
		Replace equations \eqref{second-order Hartree in d2} and \eqref{higher-order Hartree} by equations
		\begin{equation}\label{K(x) second-order Hartree}
			-\Delta u (x)=K(x) \left(\frac{1}{|x|^\alpha}*e^u\right)e^{u(x)}\quad \text{in }   B_{1}\setminus\{0\} \subset \mathbb{R}^{2}
		\end{equation}
		and
		\begin{equation}\label{k(x)equation}
			(-\Delta)^{\frac{n}{2}} u (x)=K(x) \left(\frac{1}{|x|^\alpha}*e^u\right)e^{u(x)}\quad \text{in }   B_{1}\setminus\{0\} \subset \mathbb{R}^{n}, \ n\geq3
		\end{equation}
		respectively, where the only difference is the presence of an additional non-negative coefficient $K(x) \in L^{\infty}(B_{1})$. Then, Theorem ~\ref{secondorderresult}, \ref{even result case},~\ref{odd case result} and Remark ~\ref{Remark b} remain valid in this setting. In these cases, the expression of $v$ becomes
		\begin{equation}\label{v(x)}
			v(x)=c_{n} \int_{B_{1}\setminus\left\{0\right\}}\ln\frac{5}{|x-y|}K(y)\left(\frac{1}{|y|^\alpha}*e^u\right)e^{u(y)}\ud y,
		\end{equation}
		where $c_{n}>0$ is a constant and $c_{2}=\frac{1}{2\pi}$. It is worth noting that when $K(x) \equiv1$, equations \eqref{K(x) second-order Hartree} and \eqref{k(x)equation} reduce to the original equations \eqref{second-order Hartree in d2} and \eqref{higher-order Hartree} respectively. Therefore, in the proofs below, we shall work directly with equations \eqref{K(x) second-order Hartree} and \eqref{k(x)equation}.
	\end{remark}
	 Inspired by \cite{Yang-Yang},  a key step in our approach is to obtain a representation formula for the solutions of differential equations \eqref{K(x) second-order Hartree} and \eqref{k(x)equation}. This method not only avoids the failure of ODE-based techniques caused by the convolution term, but also bypasses the breakdown of the maximum principle induced by the nonlocality of the equation.  This idea plays a significant role in analyzing the properties of solutions to  higher-order equations with isolated  singularities, as showed in \cite{Du-Yang2,Jin-Xiong}.

\medskip
 This paper is organized as follows. In Section 2, we firstly deduce a representation formula for singular solutions of equations \eqref{K(x) second-order Hartree} and \eqref{k(x)equation} in even dimensions $n\geq3$, and then prove Theorem \ref{secondorderresult} and Theorem \ref{even result case}, by using the elliptic regularity theory.   In Section 3, we generalize the asymptotic behavior at the origin for the solutions to \eqref{k(x)equation} in odd dimensions  $n\geq3$, and prove Theorem \ref{odd case result}. In this paper, $c$, $C$ will be used to denote different constants.

	\section{Asymptotic behavior at isolated singularities in even dimension case }
	In this section,   we focus on the singular solutions of equation \eqref{k(x)equation} with even dimensions $n\geq2$ for simplicity. First, under the finite total curvature assumption \eqref{second-order assumption}, we derive a crucial representation formula for solutions of \eqref{k(x)equation} expressed in term of  $v$. Subsequently, for $n=2$ and for even $n\geq4$,  we verify the local H\"older continuity of $v$ under necessary conditions, which allows us to obtain the asymptotic behavior of the solution at the origin.
	\subsection{Representation formula}
	Motivated by \cite{Yang-Yang} and based on the characterization of isolated singularities for the Poisson equation, we now present the representation formula for solutions of \eqref{k(x)equation} in the case of even dimension $n\geq2$. 
	\begin{proposition}[Representation Formula]\label{representation formula}
		Let $n\geq2$ be a positive even integer and $\alpha>0$.
		Suppose that $u \in C^{n}\left(B_{1}\setminus\{0\}\right)$ is a solution of \eqref{k(x)equation}, where $K \in L^{\infty}(B_{1})$ is non-negative and $u$ satisfies \eqref{second-order assumption}, then there exists constants $a_{\beta} \in \mathbb{R}$ with $|\beta|\leq n-1$, such that
		\begin{equation}
			u(x)=v(x)+h(x)+\sum_{|\beta|\leq n-1} a_{\beta}D^{\beta}\phi(x)  \quad\text{for }x\in B_{1} \setminus\{0\},
		\end{equation}
		where $\phi(x) = c_{n} \ln\frac{5}{|x|}$ is a fundamental solution of $(-\Delta)^\frac{n}{2}$ in $\mathbb{R}^{n}$, $h\in C^{\infty}(B_{1})$ is a solution of $(-\Delta)^{\frac{n}{2}}h=0$ in $B_{1}$ and $v$ is given by \eqref{v(x)}.
	\end{proposition}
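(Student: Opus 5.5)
Set $f(y):=K(y)\left(\frac{1}{|y|^\alpha}*e^u\right)e^{u(y)}\ge 0$. The plan is to subtract the Newtonian-type potential $v$ of $f$ from $u$ and to identify the remainder as a polyharmonic function with a point singularity at the origin. By \eqref{second-order assumption} and $K\in L^\infty(B_1)$ we have $f\in L^1(B_1)$.

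\textbf{Step 1: the potential $v$.} Since $|x-y|\le 2$ for $x,y\in B_1$, the kernel satisfies $\ln\frac{5}{|x-y|}\ge \ln\frac52>0$. Because $\ln\frac{5}{|\cdot|}\in L^1_{loc}$, Fubini gives $v\in L^1(B_1)$. We next show that $(-\Delta)^{\frac n2}v=f$ in $\mathcal D'(B_1)$. For this we use that $\phi$ is a fundamental solution, so $\int_{B_1}\phi(x-y)(-\Delta)^{\frac n2}\psi(x)\,dx=\psi(y)$ for $\psi\in C_c^\infty(B_1)$, and then exchange the order of integration. Since $f$ is locally H\"older continuous in $B_1\setminus\{0\}$ (it is continuous there because $u\in C^n$ and the convolution is smooth away from where $e^u$ concentrates), standard potential theory gives $v\in C^{n}(B_1\setminus\{0\})$ with the pointwise equation holding. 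If that regularity is awkward to obtain, working distributionally suffices.

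\textbf{Step 2: the remainder.} Set $w:=u-v\in C^n(B_1\setminus\{0\})$. Then $(-\Delta)^{\frac n2}w=0$ in $B_1\setminus\{0\}$. The key point is to control $w$ near $0$ so that the distribution $T:=(-\Delta)^{\frac n2}w$ on $B_1$ (with $w$ extended to an $L^1_{loc}$ function) is supported at the origin and has finite order at most $n-1$.

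\textbf{Step 3: $w\in L^1_{loc}(B_1)$.} This is where the sign structure enters. The paper's phrase "characterization of isolated singularities for the Poisson equation" suggests arguing iteratively with $-\Delta$, as in \cite{Yang-Yang}. For $n=2$, $w$ is harmonic in the punctured disk, and Laurent/B\^ocher-type expansions give $w=h+a\ln|x|+\sum_{k\ge1}(\text{terms with } r^{-k})$. We must rule out the negative powers. Here I would use that $f\ge0$ forces $-\Delta u\ge 0$, so $u$ is superharmonic. Combined with $e^u\in L^1$ near $0$ (obtained from $\int f<\infty$ via a lower bound of the convolution factor on $B_1$, namely $\frac{1}{|x|^\alpha}*e^u\ge 2^{-\alpha}\int_{B_1} e^u$, so $K e^u\cdot\text{const}\le f$ after treating $K$; more precisely $e^{u}\in L^1$ follows if $K$ is bounded below, and otherwise one uses Jensen on circular averages), Jensen applied to the spherical average of $u$ rules out $r^{-k}$ blow-up in the averaged sense. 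A growth bound of the type $\int_{B_r}|w|=o(r^{-?})$ then kills the high multipoles.

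I expect this step to be the main obstacle: showing that $w$ has at most logarithmic-type growth in $L^1$ near $0$, so that the finite-order distribution $T$ has order at most $n-1$ rather than arbitrary order. A cleaner alternative I would try first is the following. Take $\eta$ a cutoff and consider $T=(-\Delta)^{\frac n2}(\eta w)$ minus the smooth terms. By the general structure theorem for distributions supported at a point, $T=\sum_\beta a'_\beta D^\beta\delta_0$, which is a finite sum provided $w\in L^1_{loc}(B_1)$ (finite order follows from local integrability). The superharmonicity/Jensen argument is then needed only to establish $w\in L^1_{loc}$ and to bound the order by $n-1$. Since the statement allows all $|\beta|\le n-1$ (and a posteriori the theorems need \eqref{o(r^{n-2})} to reduce to $\beta=0$), I would match orders by testing against $\psi$ with $\psi(x)=x^\beta/\beta!$ near $0$, and use $\int_{B_r}|w|$ bounds to force $a'_\beta=0$ for $|\beta|\ge n$.

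\textbf{Step 4: conclusion.} Given such a $T$, the function $w-\sum_{|\beta|\le n-1}a_\beta D^\beta\phi$ is a distributional solution of $(-\Delta)^{\frac n2}(\cdot)=0$ in $B_1$, hence, by Weyl's lemma for polyharmonic operators, it equals some $h\in C^\infty(B_1)$. This yields the stated formula $u=v+h+\sum_{|\beta|\le n-1}a_\beta D^\beta\phi$.
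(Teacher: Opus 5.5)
There is a genuine gap, and it sits exactly in your Step 3. Steps 1, 2 and 4 are the standard route. The scaling test you sketch at the end of Step 3 also works: pair $(-\Delta)^{\frac n2}w$ with $\psi(x/\varepsilon)$, where $\psi=x^\beta$ near $0$, and use $\int_{B_\varepsilon}|w|\to0$ to force $a_\beta=0$ for $|\beta|\ge n$. But everything hinges on $w\in L^1_{loc}(B_1)$, and you leave this unproved. The tools you propose cannot supply it:
\begin{itemize}
\item For $n\ge4$, $(-\Delta)^{\frac n2}u\ge0$ does not make $u$ superharmonic.
\item Jensen on spherical averages only controls the radial mode. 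The singular terms you must exclude (the multipoles $D^\beta\phi$ with $|\beta|\ge1$, or the $r^{-k}\cos k\theta$ terms when $n=2$) have zero spherical mean, so averaging cannot see them.
\end{itemize}
Your worry about $K$ is also unfounded. The condition \eqref{second-order assumption} does not contain $K$, so $2^{-\alpha}\bigl(\int_{B_1}e^u\bigr)^2\le\int_{B_1\setminus\{0\}}\bigl(\frac{1}{|x|^\alpha}*e^u\bigr)e^u<\infty$ gives $e^u\in L^1(B_1)$ directly, with no lower bound on $K$.

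The missing idea is one-sided integrability. This is precisely what the paper proves: $u^+\le e^u$, hence $u^+\in L^1(B_1)$. It then applies \cite[Theorem 1.4]{Yang-Yang} with $s=0$, which converts this into the representation formula. To close your self-contained route, proceed as follows.
\begin{itemize}
\item Since $\ln\frac{5}{|x-y|}>0$ on $B_1\times B_1$, you have $v\ge0$, hence $w^+\le u^+\in L^1$.
\item Averaging commutes with $\Delta$, so the spherical mean $\bar w(r)$ solves the radial equation $\Delta^{\frac n2}\bar w=0$ on $(0,1)$.
\item Therefore $\bar w$ is a combination of $1,\ \ln r,\ r^{2j},\ r^{-2j}$ with $1\le j\le\frac{n-2}{2}$. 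All of these lie in $L^1(r^{n-1}\,\ud r)$ near $0$; the worst case is $r^{2-n}$.
\item Using $|w|=2w^+-w$,
\[
\int_{B_{1/2}\setminus B_\varepsilon}|w|\,\ud x=2\int_{B_{1/2}\setminus B_\varepsilon}w^+\,\ud x-|\mathbb{S}^{n-1}|\int_\varepsilon^{1/2}r^{n-1}\bar w(r)\,\ud r ,
\]
which is bounded uniformly in $\varepsilon$. Thus $w\in L^1(B_{1/2})$.
\end{itemize}
With this in hand, $T=(-\Delta)^{\frac n2}w$ is a well-defined distribution on $B_1$ supported at $0$. Your scaling argument then caps its order at $n-1$, and Weyl's lemma finishes the proof.

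One smaller point: with $K$ only in $L^\infty$, your claim that $f$ is locally H\"older continuous is unjustified. The distributional version of Step 1 is the one to use.
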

	
	\begin{proof}
		Define $u^{+}(x) :=\max\left\{u(x), 0\right\}$. It is easy to verify that $u^{+}(x) \leq e^{{u}(x)}$. Thus, we have
		\begin{align*}
			\int_{B_{1}\setminus\{0\}}K(x)\left(\frac{1}{|x|^\alpha}*e^u\right)e^{u(x)} \ud x& \geq C \int_{B_{1}\setminus \{0\}}\left(\int_{B_{1}\setminus \{0\}}\frac{u^{+}(y)}{|x-y|^{\alpha}} \ud y\right) u^{+}(x) \ud x \\
			& \geq \frac{C}{2^{\alpha}}\left(\int_{B_{1} \setminus \{0\}}u^{+}(x) \ud x\right)^{2}.
		\end{align*}
		This, together with \eqref{second-order assumption}, implies that
		\begin{equation*}
			\int_{B_{1} \setminus \{0\}}u^{+}(x) \ud x <+\infty.
		\end{equation*}
		Applying the results in \cite[Theorem 1.4]{Yang-Yang} with $s=0$, we conclude the proof of this proposition.
	\end{proof}
	\subsection{Two-dimensional case}	
	For the case $n=2$, a fundamental solution of $-\Delta$ in $\mathbb{R}^{2}$ is given by $\phi(x)=\frac{1}{2\pi}\ln\frac{5}{|x|}$. We will give the following specific representation for solutions of equation \eqref{K(x) second-order Hartree}. 
	\begin{lemma}\label{lemma2.2}
		Let $\alpha>0$. Suppose that $u \in C^{2}\left(B_{1}\setminus\{0\}\right)$ is a solution of \eqref{K(x) second-order Hartree} satisfying condition \eqref{second-order assumption}, and $K \in L^{\infty}(B_{1})$ is non-negative. Then u decomposes as
		\begin{equation*}
			u(x)=v(x)+h(x)+ a_{0}\phi(x)  \quad\text{for }x\in B_{1} \setminus\{0\},
		\end{equation*}
		where  $v$ is given by \eqref{v(x)},
	 $h \in C^{\infty}(B_{1})$ is a harmonic function in $B_{1}$ satisfying $-\Delta h =0$, and the constant $a_{0}$ satisfies the constraint  $a_{0}<4\pi$.
	\end{lemma}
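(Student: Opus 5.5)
From Proposition \ref{representation formula} with $n=2$, the sum $\sum_{|\beta|\leq 1}a_\beta D^\beta\phi$ contains the constant $a_0$ times $\phi$ plus the two first-order terms $a_i\partial_i\phi$, where $\partial_i\phi(x)=-\frac{1}{2\pi}\frac{x_i}{|x|^2}$. So two things must be shown: the dipole coefficients $a_1,a_2$ vanish, and $a_0<4\pi$. The only tool is integrability, via the bound $\int_{B_1\setminus\{0\}} u^+\,\ud x<\infty$ obtained in the proof of Proposition \ref{representation formula}. I expect the same bound to follow for $e^{u}$ itself, since the double integral in \eqref{second-order assumption} dominates $2^{-\alpha}\|Ke^u\|_{L^1}$-type quantities; for $K\equiv1$ one simply gets $\int e^u<\infty$ after squaring.

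\textbf{Step 1 (properties of $v$ and $h$).} The density $f:=K(\frac{1}{|x|^\alpha}*e^u)e^u$ is nonnegative and lies in $L^1(B_1)$. Since $\ln\frac{5}{|x-y|}\geq \ln\frac52>0$ on $B_1$, we get $v\geq 0$ and $v\in L^1(B_1)$. By Fubini and $\int_{B_1}|\ln|x-y||\,\ud x\le C$, $v$ is also in $L^p$ for every $p<\infty$, though all we really need is $v\ge0$. Since $h$ is bounded near $0$, these give $u\geq -C + a_0\phi + \sum_i a_i\partial_i\phi$ near the origin.

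\textbf{Step 2 (kill the dipole).} Suppose $(a_1,a_2)\neq 0$. On a sector of angular width $\pi/2$ where $\sum_i a_i\partial_i\phi(x)\geq c|x|^{-1}$ with $c>0$, the dipole term beats the logarithm, so $u\geq c|x|^{-1}/2$ for small $|x|$. Then $\int u^+\geq \int_{\text{sector}} c|x|^{-1}\,\ud x$ is finite in 2D, so $u^+$ gives no contradiction; we need $e^u$ instead. Indeed $\int_{\text{sector}} e^{c/(2|x|)}\,\ud x=\infty$. Using $v\geq0$ and $h$ bounded, this contradicts $e^u\in L^1$ near $0$. That integrability holds because the convolution factor is bounded below: $\frac{1}{|x|^\alpha}*e^u\ge 2^{-\alpha}\int e^u$ on $B_1$, and $\int e^u>0$. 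This requires $K\ge k_0>0$ near $0$; for general $K\ge0$ I would instead use that $\frac{1}{|x|^\alpha}*e^u$ itself must be finite somewhere, which forces $e^u\in L^1$ whenever that factor is finite at a point of $B_1$. That finiteness holds at a.e.\ point where $Ke^u>0$. This is the delicate point, and I would handle it by noting $\int_{B_1}\frac{e^{u(y)}}{|x-y|^\alpha}\ud y\ge 2^{-\alpha}\int e^u$.

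\textbf{Step 3 ($a_0<4\pi$).} With $a_1=a_2=0$, we have $u\geq -C+\frac{a_0}{2\pi}\ln\frac{1}{|x|}$ near $0$, so $e^u\geq c|x|^{-a_0/(2\pi)}$. Integrability in $\mathbb{R}^2$ forces $a_0/(2\pi)<2$, i.e.\ $a_0<4\pi$. The main obstacle is Step 2 together with rigorously securing $e^u\in L^1(B_1)$ from \eqref{second-order assumption} for a general nonnegative $K$. In the unweighted case this follows directly from $\frac{1}{|x|^\alpha}*e^u\ge 2^{-\alpha}\int e^u$, which gives $(\int e^u)^2\le 2^\alpha\cdot$(finite total curvature).
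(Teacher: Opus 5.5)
Your proposal is correct and follows essentially the paper's argument. You bound $u$ below by $-C+a_0\phi+\sum_i a_i\partial_i\phi$ using $v\ge 0$ and boundedness of $h$, convert finite total curvature into integrability near $0$, kill the dipole on a sector, and get $a_0<4\pi$ from radial integrability; your single sector centred on $-(a_1,a_2)$ is a tidier version of the paper's four-sector case split, and since \eqref{second-order assumption} carries no weight $K$, $(\int e^u)^2\le 2^\alpha\int(\frac{1}{|x|^\alpha}*e^u)e^u<\infty$ holds directly and your worry about degenerate $K$ is unnecessary.
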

	\begin{proof}
		Using proposition~\ref{representation formula} with $n=2$, we obtain that
		\begin{equation}\label{secondorderres1}
			u(x)=v(x)+h(x)+\sum_{|\beta|\leq 1} a_{\beta}D^{\beta}\phi(x)  \quad\text{for }x\in B_{1} \setminus\{0\},
		\end{equation}
		where $v$ is given by \eqref{v(x)}. We will prove that $a_{\beta}=0$ for $|\beta|=1$. By \eqref{second-order assumption} and \eqref{secondorderres1}, we obtain that
		\begin{equation}\label{second-order assumption1}
			\int_{B_{\frac{1}{2}} \setminus \{0\}}\left(\int_{B_{\frac{1}{2}} \setminus \{0\}}\frac{e^{v(y)}e^{h(y)}e^{\sum_{|\beta|\leq1}a_{\beta}D^{\beta}\phi(y)}}{|x-y|^{\alpha}}\ud y\right)e^{v(x)}e^{h(x)}e^{\sum_{|\beta|\leq1}a_{\beta}D^{\beta}\phi(x)} \ud x<+\infty.
		\end{equation}
		Since $h \in C^{\infty}(B_{1}(0))$, we get that $C_{1}< e^h <C_{2}$ in $B_{1/2}$ for two constants $C_{1}, C_{2}>0$. By the nonnegativity of $K$, then $e^{v} \geq 1 $ in $B_{1/2} \setminus \{0\}$. For $x,y \in B_{1/2}$, we have $\frac{1}{|x-y|^\alpha} \geq 1$. Hence, from \eqref{second-order assumption1}, we yields
		\begin{equation*}
			\int_{B_{\frac{1}{2}} \setminus \{0\}} e^{\sum_{|\beta|\leq 1} a_{\beta}D^{\beta}\phi(x)} \ud x<+\infty.
		\end{equation*}
		Note that $D^{(i,j)} \phi$ represents the derivative of $\phi$, taken $i$ times with respect to $x_{1}$ and $j$ times with respect to $x_{2}$. Thus, based on the expression of the fundamental solution of $-\Delta$ in $\mathbb{R}^{2}$, we obtain that
		\begin{equation}\label{second-order assumption2}
			\int_{B_{\frac{1}{2}} \setminus \{0\}}e^{a_{0}\frac{1}{2\pi}\ln \frac{5}{|x|}+a_{(1,0)}\left(-\frac{x_{1}}{2\pi|x|^{2}}\right)+a_{(0,1)}\left(-\frac{x_{2}}{2\pi|x|^{2}}\right)} \ud x <+\infty.
		\end{equation}
		
Consider the polar coordinates
		\[
		\begin{cases}
			x_1 = r \cos \theta, \\
			x_2 = r \sin \theta,
		\end{cases}
		\quad 0 \le r < 1, \ -\pi \le \theta \le \pi.
		\]
		Now, we show that if $a_{(0,1)}\leq0$, then $a_{(1,0)}=0$. Let
		\begin{equation*}
			D_{1}=\left\{(x_{1},x_{2})\in B_{1/2}\setminus\{0\}\bigg| 0\leq r <\frac{1}{2}, 0 \leq \theta \leq \frac{\pi}{4} \right\}. 
		\end{equation*}
		By \eqref{second-order assumption2}, we get that
		\begin{equation*}
			\int_{0}^{\frac{1}{2}} \int_{0}^{\frac{\pi}{4}}\left(\frac{5}{r}\right)^{\frac{1}{2\pi}a_{0}}e^{\frac{1}{2\pi}\left[a_{(1,0)}\left(-\frac{\cos\theta}{r}\right)+a_{(0,1)}\left(-\frac{\sin\theta}{r}\right)\right]}r \ud \theta \ud r < \infty.
		\end{equation*}
		Since $a_{(0,1)}\leq 0 $ and $0<\theta<\frac{\pi}{4}$, we obtain that
		\begin{equation*}
			\int_{0}^{\frac{1}{2}} \int_{0}^{\frac{\pi}{4}}\left(\frac{5}{r}\right)^{\frac{1}{2\pi}a_{0}}e^{\frac{1}{2\pi}\left[a_{(1,0)}\left(-\frac{\cos\theta}{r}\right)\right]}r \ud \theta \ud r < \infty.
		\end{equation*}
		Using $\cos \theta \geq \frac{\sqrt{2}}{2}$ in $D_{1}$ and the following equality
		\begin{equation*}
			\int_{0}^{\frac{1}{2}} r^{a}e^{\frac{b}{r}} \ud r =+\infty \quad \forall a \in \mathbb{R}, b>0,
		\end{equation*}
		we have $a_{(1,0)}\geq 0$. On the other hand, we set
		\begin{equation*}
			D_{2}=\left\{(x_{1},x_{2})\in B_{1/2}\setminus\{0\}\bigg| 0\leq r <\frac{1}{2}, \frac{3\pi}{4} \leq \theta \leq \pi \right\}. 
		\end{equation*}
		Then we can also get $a_{(1,0)}\leq 0$. Hence, if $a_{(0,1)}\leq0$ holds true, then $a_{(1,0)}=0$ holds as well.
		
	 Similarly, if $a_{(0,1)}\geq0$, we can also deduce that $a_{(1,0)}=0$, by analyzing the integral in 
		\begin{equation*}
			D_{3}=\left\{(x_{1},x_{2})\in B_{1/2}\setminus\{0\}\bigg| 0\leq r <\frac{1}{2}, -\frac{\pi}{4} \leq \theta \leq 0 \right\}
		\end{equation*}
		and
		\begin{equation*}
			D_{4}=\left\{(x_{1},x_{2})\in B_{1/2}\setminus\{0\}\bigg| 0\leq r <\frac{1}{2}, -\pi \leq \theta \leq -\frac{3\pi}{4} \right\}.
		\end{equation*}
		Thus, we obtain $a_{(1,0)}=0$. Moreover, we can also get $a_{(0,1)}=0$. 
		
		Combined with \eqref{second-order assumption2}, we have
		\begin{equation*}
			\int_{B_{\frac{1}{2}} \setminus \{0\}} \left(\frac{5}{\sqrt{x_{1}^{2}+x_{2}^2}}\right)^{\frac{1}{2\pi}a_{0}} \ud x_{1} \ud x_{2} <\infty,
		\end{equation*}
		which yields that $a_{0}<4\pi$.
	\end{proof}
	Next, we proceed to prove Theorem~\ref{secondorderresult}. We establish the local H\"older continuity of $v$, from which the asymptotic behavior of $u$ at the origin follows. 
	
	\begin{proof}[Proof of Theorem~\ref{secondorderresult}]
		Since $a_{0}<4\pi$ and $a_{0}\phi(x)=\frac{1}{2\pi}a_{0}\left(\ln5-\ln|x|\right)$, by Lemma ~\ref{lemma2.2}, there exists a constant $b:=-\frac{1}{2\pi}a_{0} >-2$, such that $u$ satisfies \begin{equation}\label{K(x) second-order res}
			u(x)=v(x)+h(x)+b\ln|x| \quad\text{for }x\in B_{1} \setminus\{0\},
		\end{equation} 
		where  $v$ is given by \eqref{v(x)} and $h \in C^{\infty}(B_{1})$ is a harmonic function in $B_{1}$. Here $h$ has been redefined by absorbing the constant $-b\ln 5$. We will prove that if $\alpha \in \left(0,2\right)$ and $b > \frac{\alpha-4}{2}$, then $v \in C_{loc}^{\gamma} (B_{1})$ for some $0< \gamma <1$. By \eqref{K(x) second-order res}, we know that $v$ is a non-negative solution of 
		\begin{align*}
			-\Delta v(x) &= K(x)\left(\frac{1}{|x|^\alpha}*e^u\right)e^{u(x)} \\
			&=K(x)\left(\int_{B_{1}\setminus\{0\}}\frac{e^{v(y)}e^{h(y)}|y|^{b}}{|x-y|^\alpha} \ud y\right)e^{v(x)}e^{h(x)}|x|^{b}   \quad \text{in } B_{1}
		\end{align*}
		in the sense of distributions. According to $K\in L^{\infty}(B_{1})$ and \eqref{second-order assumption}, we have
		\begin{equation*}
			\int_{B_{1} \setminus \{0\}}K(x)\left(\int_{B_{1}\setminus\{0\}}\frac{e^{v(y)}e^{h(y)}|y|^{b}}{|x-y|^\alpha} \ud y\right)e^{v(x)}e^{h(x)}|x|^{b} \ud x <\infty.
		\end{equation*} 
		
		We claim that  $K(x)\left(\frac{1}{|x|^\alpha}*e^u\right)e^{u(x)} \in L^{p_{0}}_{loc}(B_{1})$ for some $p_{0}>1$. In fact, we fix an arbitrary $r \in (0,1)$. For convenience, we define $f(x):=e^{u(x)}=|x|^{b}e^{h(x)}e^{v(x)}$ and $g_{r}(x) := \int_{B_{r} \setminus \{0\}}\frac{f(y)}{|x-y|^\alpha} \ud y$. We divide $K(x)\left(\frac{1}{|x|^\alpha}*e^u\right)e^{u(x)}$ into two parts:
		\begin{equation*}
		\begin{aligned}
			K(x)\left(\frac{1}{|x|^\alpha}*e^u\right)e^{u(x)} =&K(x)g_{r}(x)f(x)+K(x) \left(\int_{B_{r_{1}}\setminus B_{r}} \frac{e^{u(y)} }{|x-y|^{\alpha}}\ud y \right) f(x) \\
			&+K(x) \left(\int_{B_{1}\setminus B_{r_{1}}} \frac{e^{u(y)} }{|x-y|^{\alpha}}\ud y \right) f(x) \\
			:= &\mathcal{I}_{1}(x) +\mathcal{I}_{2}(x)+\mathcal{I}_{3}(x) \quad \text{in }B_{r},
		\end{aligned}
			\end{equation*}
		where we choose $r_{1}$ satisfying $r<r_{1}<1$.
		
		For $\mathcal{I}_{1}$, by the results in \cite[Corollary 1 and Remark 2]{Brezis-Merle}, we konw that $e^{v} \in L^{p}(B_{r})$ for each $p>0$. Since $b>-2$, there exists $p_{f}>1$, such that $bp_{f}>-2$, whence $|x|^{b} \in L^{p_{f}}(B_{r})$. We consider two cases:\\
		(i) $b \geq 0$;\\
		(ii)$-2<b<0$.
		
			For case (i), we can choose any real number $p_{f}>1$ such that $|x|^{b} \in L^{p_{f}}(B_{r})$. Therefore, $f \in L^{q}(B_{r})$ for any $q>1$. Since  $0<\alpha<2$, by the Hardy–Littlewood–Sobolev inequality, we easily get that there exists some $q_{1}>1$ such that $g_{r}\in L^{q_{1}} (B_{r})$, and hence $\mathcal{I}_{1}(x) \in L^{q_{1}} (B_{r})$. 
		
		For case (ii), by the H\"{o}lder inequality, we can choose some $p_{1}$ satisfying $1<p_{1}<\min\{\frac{2}{2-\alpha}, -\frac{2}{b}\}$, such that  $|x|^{b} \in L^{p_{1}}(B_{r})$ and $f \in L^{p_{1}}(B_{r})$. Applying the Hardy–Littlewood–Sobolev inequality together with the chosen value of $p_{1}$, we obtain the existence of $t>1$ satisfying
		\begin{equation}\label{1/t}
			\frac{1}{t} =\frac{1}{p_{1}}+\frac{\alpha}{2}-1,
		\end{equation}
		such that
		\begin{equation}\label{hls}
			\|g_{r}\|_{L^{t}(B_{r})} \leq C\|f\|_{L^{p_{1}}(B_{r})}
		\end{equation}
		for some constant $C>0$. Next, we consider two cases here:\\
		(a) $-\frac{2}{b}\geq\frac{2}{2-\alpha}$, i.e. $\alpha-2\leq b<0$,\\
		(b) $-\frac{2}{b}<\frac{2}{2-\alpha}$, i.e. $-2<b<\alpha-2$.
		
		For case (a), we have $1< p_{1}<\frac{2}{2-\alpha}$. Thus, combined with \eqref{1/t}, we know that $0<\frac{1}{t}<\frac{\alpha}{2}$. By direct computation, for each fixed $b$, there exists a real number $s$ such that $\frac{t}{t-1}<s<-\frac{2}{b}$ when $t$ is sufficiently large, and $|x|^{b} \in L^{s}(B_{r}) $ as well as $f \in L^{s} (B_{r})$. Combining this with \eqref{hls}, one can find some $q_{2}>1$ satisfying $\frac{1}{s} + \frac{1}{t} = \frac{1}{q_{2}}$, so that 
		\begin{equation*}
			\int_{B_{r} \setminus \{0\}}\left|f(x)g_{r}(x)\right|^{q_{2}} \ud x \leq \int_{B_{r} \setminus \{0\}} \left|f(x)\right|^{q_{2}} \left|g_{r}(x)\right|^{q_{2}} \ud x \leq \|f^{q_{2}}\|_{\frac{s}{q_{2}}} \|(g_{r})^{q_{2}}\|_{\frac{t}{q_{2}}}<\infty,
		\end{equation*}
		where we have used the H\"{o}lder inequality. Consequently, it follows that
		\begin{equation*}
			\mathcal{I}_{1}(x) \in L^{q_{2}}(B_{r}).
		\end{equation*}
		
		For case (b), we have $1<p_{1}<-\frac{2}{b}$. Under the assumption that $0<\alpha<2$ and $b>\frac{\alpha-4}{2}$, we obtain that $\frac{\alpha-4}{2}<b<\alpha-2$ and $\frac{-b+\alpha-2}{2}<\frac{1}{t}<\frac{\alpha}{2}$. Hence, for each fixed $b$, we can also find a real number $s$ satisfying $\frac{t}{t-1}<s<-\frac{2}{b}$ such that  $|x|^{b} \in L^{s}(B_{r}) $ and $f \in L^{s} (B_{r})$. Using a similar method as employed in case (a), we conclude that there exists some $q_{3}>1$ satisfying $\frac{1}{s} + \frac{1}{t} = \frac{1}{q_{3}}$ such that 
		\begin{equation*}
			\mathcal{I}_{1}(x) \in L^{q_{3}}(B_{r}).
		\end{equation*}
		
		For $\mathcal{I}_{2}$, by the regularity of $u$ in $\bar{B}_{r_{1}} \setminus \left\{0\right\}$, we know that there exists a constant $M>0$ such that $u<M$ in $\bar{B}_{r_{1}} \setminus B_{r}$. Consequently, we have the estimate 
		\begin{align*}
			\mathcal{I}_{2}(x) &\leq C \left(\int_{B_{r_{1}}\setminus B_{r}} \frac{e^{M}}{|x-y|^{\alpha}} \ud y\right)f(x) \\
			& \leq C \left(\int_{B_{2}} \frac{1}{|y|^{\alpha}} \ud y\right)f(x) \\
			&\leq Cf(x)  \quad \text{for }x \in B_{r}.
		\end{align*}
		By the similiar argument used in the analysis of $\mathcal{I}_{1}$, we know that there exists some $q_{4}=p_{1}>1$ such that $\mathcal{I}_{2} \in L^{q_{4}}(B_{r})$.
		
		For $\mathcal{I}_{3}$, by the assumption \eqref{second-order assumption} and $\alpha \in(0,n)$, we have
		\begin{equation*}
			\left(\int_{B_{1} \setminus \{0\}} e^{u(x)} \ud x\right)^{2} \leq 2^{\alpha}\int_{B_{1}\setminus\{0\}}\left(\frac{1}{|x|^\alpha}*e^u\right)e^{u(x)} \ud x <\infty,
		\end{equation*}
		which implies that $e^{u} \in L^{1}(B_{1} \setminus \left\{0\right\})$. For $x \in B_{r} $ and $y \in B_{1} \setminus B_{r_{1}}$, we observe that 
		\begin{equation*}
			|x-y|\geq |y|-|x| \geq r_{1}-r>0.
		\end{equation*}
		Therefore, we get
		\begin{align*}
			\mathcal{I}_{3}(x) &\leq C\left(\int_{B_{1}\setminus B_{r_{1}}} \frac{e^{u(y)} }{|x-y|^{\alpha}}\ud y \right) f(x) \\
			&\leq C \left(r_{1} -r\right)^{-\alpha} \left(\int_{B_{1}\setminus B_{r_{1}}} e^{u(y)} \ud y\right)f(x)  \\
			& \leq Cf(x) \quad \text{for }x \in B_{r}.
		\end{align*}
	By an argument analogous to that for  $\mathcal{I}_{1}$,
		we have $\mathcal{I}_{3} \in L^{q_{4}}(B_{r})$.
		
		To summarize, for $\alpha \in \left(0,2\right)$ and $b > \frac{\alpha-4}{2}$, we can select $p_{0} =\min\left\{q_{1},q_{2},q_{3},q_{4}\right\}$, then $K(x)\left(\frac{1}{|x|^\alpha}*e^u\right)e^{u(x)} \in L^{p_{0}}(B_{r})$. Since $r \in(0,1)$ is arbitrary, we deduce that
		\begin{equation*}
			K(x)\left(\frac{1}{|x|^\alpha}*e^u\right)e^{u(x)} \in L^{p_{0}}_{loc}(B_{1}) \quad\text{for some } p_{0}>1.
		\end{equation*} 
		An application of the standard $W^{2,p}$ elliptic regularity theory (see \cite{Gilbarg-Trudinger}) directly yields the improved regularity $v \in W^{2,p_{0}}_{loc}(B_{1})$. It then follows from the Sobolev embedding theorem that $v$ is locally H\"older continuous, i.e., $v \in C^\gamma_{loc}(B_{1})$ for some $\gamma \in (0,1)$.
	\end{proof}
	\subsection{ Even-dimensional case ($n \geq 4$)}
In this subsection,	 following a similar detailed analysis in two-dimensional case, we study the Hartree-type equation \eqref{k(x)equation} in even dimensions $n \geq 4$. The corresponding polyharmonic equations involve higher-order singularities, which introduce substantial new challenges. Thus, we need an  additional decay condition \eqref{o(r^{n-2})} near the origin. Under the finite total curvature assumption \eqref{second-order assumption} and condition \eqref{o(r^{n-2})}, we also derive a representation formula for solutions to  \eqref{k(x)equation} in the even-dimensional case, and further investigate the asymptotic behavior of these solutions near the origin.
	
	If $n\geq4$ is even, a fundamental solution of $\left(-\Delta\right)^{\frac{n}{2}}$ is given by $\phi(x) = c_{n} \ln\frac{5}{|x|}$, with $c_{n}>0$ being a constant. Then we have
	\begin{lemma}\label{even case expression}
		Let $n\geq4$ be an even integer and $\alpha>0$. Suppose that $u \in C^{n}\left(B_{1}\setminus\{0\}\right)$ is a solution of \eqref{k(x)equation} satisfying condition \eqref{second-order assumption} and \eqref{o(r^{n-2})}, and $K \in L^{\infty}(B_{1})$ is non-negative. Then u decomposes as
		\begin{equation}
			u(x)=v(x)+h(x)+ a_{0}\phi(x)  \quad\text{for }x\in B_{1} \setminus\{0\},
		\end{equation}
		where $v$ is defined by \eqref{v(x)}. The function $h \in C^{\infty}(B_{1})$ is a polyharmonic function in $B_{1}$ satisfying $(-\Delta)^{\frac{n}{2}}h =0$, and the constant $a_{0}$ satisfies the constraint  $a_{0}<\frac{n}{c_{n}}$.
	\end{lemma}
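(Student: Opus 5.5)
We start from Proposition~\ref{representation formula}, which gives
\begin{equation*}
u=v+h+\sum_{|\beta|\le n-1}a_\beta D^\beta\phi \quad \text{in } B_1\setminus\{0\}.
\end{equation*}
The proof then has two tasks. First, show that all $a_\beta$ with $1\le|\beta|\le n-1$ vanish. Second, show that $a_0<\frac{n}{c_n}$.

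For the first task we use the decay condition \eqref{o(r^{n-2})}. Note that $D^\beta\phi$ with $|\beta|=k\ge1$ is homogeneous of degree $-k$, of the form $P_\beta(x)/|x|^{2k}$ with $P_\beta$ a homogeneous polynomial of degree $k$. The other terms are mild:
\begin{itemize}
\item $h$ is bounded near $0$;
\item $\phi$ is logarithmic, so $\int_{B_r}|\phi|=O(r^n|\ln r|)=o(r^{n-2})$;
\item $v\ge0$ and $v\in L^1(B_1)$, since it is a logarithmic potential of an $L^1$ measure.
\end{itemize}
For $v$ we need the sharper bound $\int_{B_r}v=o(r^{n-2})$. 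This follows from Fubini: $\int_{B_r}\ln\frac{5}{|x-y|}\,dx\le Cr^n|\ln r|$ uniformly in $y$, and the total mass is finite.

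Hence \eqref{o(r^{n-2})} forces
\begin{equation*}
\int_{B_r}\Big|\sum_{1\le|\beta|\le n-1}a_\beta D^\beta\phi\Big|\,dx=o(r^{n-2}).
\end{equation*}
To handle this, group by degree, writing $\Psi_k=\sum_{|\beta|=k}a_\beta D^\beta\phi$, homogeneous of degree $-k$. Take the largest $k$ with $\Psi_k\not\equiv0$ and integrate over the annulus $B_r\setminus B_{r/2}$. The term $\Psi_k$ contributes exactly $c\,r^{n-k}$ with $c>0$, while the lower-degree terms are of smaller order. Thus
\begin{equation*}
\int_{B_r\setminus B_{r/2}}|\Psi_{\le k}|\ge c\,r^{n-k}-Cr^{n-k+1}.
\end{equation*}
Since $r^{n-k}\ne o(r^{n-2})$ for $k\ge2$, we conclude $\Psi_k\equiv0$ for all $k\ge2$.

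Degree $k=1$ escapes this argument because $r^{n-1}=o(r^{n-2})$. For $\Psi_1=-c_n\,a\cdot x/|x|^2$ we instead reuse the exponential-integrability argument of Lemma~\ref{lemma2.2}. As there, $K\ge0$ gives $v\ge0$, $e^h$ is bounded below, and $|x-y|^{-\alpha}\ge1$ on $B_{1/2}$. Condition \eqref{second-order assumption} then yields
\begin{equation*}
\int_{B_{1/2}}e^{a_0\phi+\Psi_1}<\infty.
\end{equation*}
On a cone around the direction $-a$, the exponent $\Psi_1$ is at least $c|a|/|x|$. Integrating in polar coordinates (with spherical angles in place of $\theta$) then gives $\int_0 r^{m}e^{c/r}\,dr=\infty$ unless $a=0$. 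It is also necessary to check that $\Psi_k\equiv0$ really implies $a_\beta=0$ for the individual $\beta$. Here we use the harmonic relations: $\Delta^{n/2}\phi$ is a multiple of $\delta$, so distinct $D^\beta\phi$ may coincide up to combinations. Any such combination is supported at the origin, or smooth, and can be absorbed into $h$. This means we only need the functions $\Psi_k$, not the individual coefficients, which suffices for the statement.

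Finally, once the $\Psi_k$ are eliminated, the same integrability bound gives
\begin{equation*}
\int_{B_{1/2}}|x|^{-c_na_0}\,dx<\infty,
\end{equation*}
so $c_na_0<n$, that is, $a_0<\frac{n}{c_n}$.

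The main obstacle is the sharp estimate $\int_{B_r}v=o(r^{n-2})$. The uniform-in-$y$ Fubini bound above is what we would try first; if it fails, we would split the measure into its parts near and far from the origin.
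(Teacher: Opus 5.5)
Your proposal is correct and follows the paper's proof essentially step by step. The Fubini bound $\int_{B_r}v\le Cr^n\ln\frac1r$ together with the decay condition eliminates the homogeneous pieces of degree $\ge2$; you integrate over an annulus and take the top nonzero degree, where the paper uses a cone and iterates downward. The integrability $\int_{B_{1/2}}e^{a_0\phi+\Psi_1}<\infty$ then kills the degree-one term and gives $c_na_0<n$; your aside about coinciding $D^\beta\phi$ is unnecessary, since for fixed order $\le n-1$ no nontrivial combination vanishes away from the origin, but it does no harm because the statement only needs $\Psi_k\equiv0$.
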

	\begin{proof}
		If $u$ solves \eqref{k(x)equation} and $K \in L^{\infty}(B_{1})$, Proposition~\ref{representation formula} with $s=0$ gives the decomposition:
		\begin{equation}\label{proof u(x) decomposition}
			u(x)=v(x)+h(x)+\sum_{|\beta|\leq n-1} a_{\beta}D^{\beta}\phi(x)  \quad\text{for }x\in B_{1} \setminus\{0\},
		\end{equation}
		where $v$ is given by \eqref{v(x)} and $h \in C^{\infty}(B_{1})$ is a solution of $(-\Delta)^{\frac{n}{2}}h =0$ in $B_{1}$.
		Next we will prove that $\sum_{1\leq|\beta|\leq n-1} a_{\beta}D^{\beta}\phi(x) \equiv 0$. We can split the proof in two steps.

		\textbf{Step 1.}
			We show that all coefficients corresponding to derivatives of order $2 \leq |\beta| \leq n-1$ must be zero.
		
		By the Fubini Theorem and the condition \eqref{second-order assumption}, we obtain
		\begin{align*}
			\int_{B_{r}}v(x) \ud x &\leq C \int_{B_{r}} \left[\int_{B_{1}} \ln \frac{5}{|x-y|}K(y) \left(\frac{1}{|y|^{\alpha}}\ast e^{u}\right)e^{u(y)} \ud y\right]\ud x\\
			&\leq C \int_{B_{1}} \left(\int_{B_{r}} \ln \frac{5}{|x-y|} \ud x\right) K(y) \left(\frac{1}{|y|^{\alpha}}\ast e^{u}\right)e^{u(y)} \ud y\\
			& \leq C r^{n}\ln \frac{1}{r} \int_{B_{1}} K(y) \left(\frac{1}{|y|^{\alpha}}\ast e^{u}\right)e^{u(y)} \ud y \\
			& \leq Cr^{n}\ln \frac{1}{r}, \qquad \text{for } 0<r<\frac{1}{2}.
		\end{align*}
		Combining this with \eqref{o(r^{n-2})} and \eqref{proof u(x) decomposition}, we conclude that
		\begin{equation}\label{Pn-1/|x|}
			\int_{B_{r}} \left|\sum_{|\beta|\leq n-1} a_{\beta}D^{\beta}\phi(x)\right| \ud x =o(1) r^{n-2} \quad \text{as }r \to 0.
		\end{equation}
	
		For the top-order derivatives $|\beta|=n-1$, we group them as 
		\begin{equation}
			\sum_{|\beta|= n-1} a_{\beta}D^{\beta}\phi(x) =\frac{P_{n-1}(x)}{|x|^{2n-2}},
		\end{equation}
		where $P_{n-1}(x)$ is a homogeneous polynomial of degree $n-1$. Restrict $P_{n-1}$ to the unit sphere $\mathbb{S}^{n-1}$ by the map $\theta \mapsto P_{n-1}(\theta) $. This restriction is a continuous function on the compact set $\mathbb{S}^{n-1}$.  We claim that $P_{n-1} \equiv0$. In fact, 
		if $P_{n-1} \not\equiv 0$, then there exists at least one point $\theta_{0} \in \mathbb{S}^{n-1}$ such that $P_{n-1}(\theta_{0})>0$. Let $d_{0} = \frac{1}{2}P_{n-1}(\theta_{0})$. Therefore, by continuity, there exists a geodesic ball neighborhood $U_{0} \subset \mathbb{S}^{n-1}$ of $\theta_{0}$ such that $\left|P_{n-1}\right| \geq d_{0}>0$ for all $\theta \in U_{0}$. Using polar coordinates $x=\rho\theta$ with $\theta \in \mathbb{S}^{n-1}$, the volume element becomes 
		\begin{equation*}
			\ud x = \rho^{n-1} \ud \rho \ud \theta.
		\end{equation*}
		By homogeneity, for $\theta \in \mathbb{S}^{n-1} $ and $\rho>0$, we have 
		\begin{equation*}
			P_{n-1}(x)=\rho^{n-1}P_{n-1}(\theta).
		\end{equation*}
		Now, define the region $V_{r}=[0,r] \times U_{0}$. Then,
		\begin{equation*}
			\int_{V_{r}} \left|	\sum_{|\beta|= n-1} a_{\beta}D^{\beta}\phi(x)\right| \ud x =\int_{U_{0}} \left|P_{n-1}(\theta)\right| \ud \theta \int_{0}^{r} \ud \rho \geq d_{1} r
		\end{equation*}
		for some constant $d_{1}>0$. On the other hand, a direct computation gives
		\begin{equation*}
			\int_{B_{r}} \left|\sum_{|\beta|\leq n-2} a_{\beta}D^{\beta}\phi(x)\right| \ud x \leq C \int_{B_{r}} |x|^{-n+2} \ud x \leq Cr^{2}.
		\end{equation*}
		Thus, combining these two estimates, we have 
		\begin{align*}
			\int_{V_{r}} \left|\sum_{|\beta|\leq n-1} a_{\beta}D^{\beta}\phi(x)\right| \ud x &\geq \int_{V_{r}} \left|\sum_{|\beta|= n-1} a_{\beta}D^{\beta}\phi(x)\right| \ud x -\int_{V_{r}} \left|\sum_{|\beta|\leq n-2} a_{\beta}D^{\beta}\phi(x)\right| \ud x\\
			& \geq \int_{V_{r}} \left|\sum_{|\beta|= n-1} a_{\beta}D^{\beta}\phi(x)\right| \ud x -\int_{B_{r}} \left|\sum_{|\beta|\leq n-2} a_{\beta}D^{\beta}\phi(x)\right| \ud x\\
			& \geq d_{1}r-Cr^{2}\\
			& \geq \frac{d_{1}}{2}r \quad \text{for small } r>0.
		\end{align*}
		This contradicts \eqref{Pn-1/|x|}. Hence, we conclude that $P_{n-1} \equiv0$, which implies
		\begin{equation*}
			\sum_{|\beta|= n-1} a_{\beta}D^{\beta}\phi(x) =0.
		\end{equation*}
		By iterating this procedure, we conclude that
		\begin{equation*}
			\sum_{|\beta|= j} a_{\beta}D^{\beta}\phi(x) =0,
		\end{equation*}
		for every index $2 \leq j \leq n-2$.
		
		\textbf{Step 2.} We show that $\sum_{|\beta|= 1} a_{\beta}D^{\beta}\phi(x) =0$ and $a_{0} <\frac{n}{c_{n}}$.
		
		By \eqref{second-order assumption}, \eqref{proof u(x) decomposition} and the results in step 1, we have
		\begin{equation}\label{higher-order assumption1}
			\int_{B_{\frac{1}{2}} \setminus \{0\}}\left(\int_{B_{\frac{1}{2}} \setminus \{0\}}\frac{e^{v(y)}e^{h(y)}e^{\sum_{|\beta|\leq1}a_{\beta}D^{\beta}\phi(y)}}{|x-y|^{\alpha}}\ud y\right)e^{v(x)}e^{h(x)}e^{\sum_{|\beta|\leq1}a_{\beta}D^{\beta}\phi(x)} \ud x<+\infty.
		\end{equation}
		To exploit this, we first note that $h \in C^{\infty}(B_{1}(0))$ implies that $C_{1}< e^h <C_{2}$ in $B_{1/2}$ for two constants $C_{1}, C_{2}>0$. By the nonnegativity of $K$, $e^{v} \geq 1 $ in $B_{1/2} \setminus \{0\}$. For $x,y \in B_{1/2}$, the kernel satisfies $\frac{1}{|x-y|^\alpha} \geq 1$.  Combining these bounds with \eqref{higher-order assumption1}, we obtain the finiteness of the integral involving the remaining exponential term:
		\begin{equation*}
			\int_{B_{\frac{1}{2}} \setminus \{0\}}e^{\sum_{|\beta|\leq 1} a_{\beta} D^{\beta}\phi(x) }\ud x<+\infty.
		\end{equation*}
		Namely, 
		\begin{equation}\label{higherorder assumption 2}
			\int_{B_{\frac{1}{2}} \setminus \{0\}} \left(\frac{5}{|x|}\right)^{c_{n}a_{0}}e^{-c_{n}\sum_{i=1}^{n}a_{i}\frac{x_{i}}{|x|^{2}}} \ud x <+\infty,
		\end{equation}
		where $a_{i} = a(0, \cdots, 1,\cdots,0)$. An argument parallel to the one used in proving Theorem~\ref{secondorderresult}, we have  $a_{i} =0$ for every $i=1,2,\cdots,n$, and hence $\sum_{|\beta|= 1} a_{\beta}D^{\beta}\phi(x) =0$. From \eqref{higherorder assumption 2}, it follows that
		\begin{equation*}
			\int_{B_{\frac{1}{2}} \setminus \{0\}}\left(\frac{5}{|x|}\right)^{c_{n}a_{0}} \ud x <+\infty,
		\end{equation*}
		which implies  $c_{n} a_{0}<n$. We  complete the proof.
	\end{proof}
	Now, we discuss the regularity of $v$. Before that, we need the following lemma to estimate $v$. 
	\begin{lemma}[Exponential integrability for logarithmic potentials]\label{Exponential integrability}
		Let $F \ge 0$ and $F \in L^{1}(B_{1})$. Define
		\begin{equation*}
			v(x)=\int_{B_1} K(x,y)F(y) \ud y, \quad \text{in }B_1 \subset \mathbb{R}^{n},
		\end{equation*}
		where the kernel $K$ satisfies
		\begin{equation}\label{kernel-bound}
			|K(x,y)| \le C_{1}\bigl(1 + |\ln|x-y||\bigr), \quad x,y \in B_1,
		\end{equation}
		for some constant $C_{1}>0$. Then for any $p>1$, we have
		\begin{equation*}
			e^{pv} \in L^{1}(B_{1}).
		\end{equation*}
	\end{lemma}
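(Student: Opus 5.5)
The plan is the classical Brezis--Merle argument: split $F$ into a piece of small $L^{1}$ norm and a bounded piece, control the first by Jensen's inequality and the second trivially. Since the kernel $K(x,y)$ may change sign, I will estimate $e^{p|v|}\ge e^{pv}$ throughout, using only the bound \eqref{kernel-bound}.

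\textbf{Step 1 (splitting).} Fix $p>1$ and a small $\varepsilon>0$, to be chosen in Step 3 depending on $p$, $C_{1}$ and $n$. Since $F\in L^{1}(B_{1})$, absolute continuity of the integral gives $M>0$ such that $\int_{\{F>M\}}F\,\ud y<\varepsilon$. Write
\[
F=F_{1}+F_{2},\qquad F_{1}=F\chi_{\{F>M\}},\qquad F_{2}=F\chi_{\{F\le M\}},
\]
and let $v_{i}(x)=\int_{B_{1}}K(x,y)F_{i}(y)\,\ud y$, so that $v=v_{1}+v_{2}$ and $e^{p|v|}\le e^{p|v_{1}|}e^{p|v_{2}|}$.

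\textbf{Step 2 (the bounded part).} By \eqref{kernel-bound},
\[
|v_{2}(x)|\le C_{1}M\sup_{x\in B_{1}}\int_{B_{1}}\bigl(1+|\ln|x-y||\bigr)\,\ud y\le C(n,C_{1},M),
\]
because $\ln|z|$ is integrable on $B_{2}$ and $|x-y|<2$ for $x,y\in B_{1}$. Hence $e^{p|v_{2}|}\in L^{\infty}(B_{1})$, and it suffices to show $e^{p|v_{1}|}\in L^{1}(B_{1})$.

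\textbf{Step 3 (the small part, via Jensen).} If $\|F_{1}\|_{1}=0$ there is nothing to prove. Otherwise set $\ud\mu=F_{1}\,\ud y/\|F_{1}\|_{1}$, a probability measure, and $\delta=pC_{1}\|F_{1}\|_{1}<pC_{1}\varepsilon$. Then
\[
p|v_{1}(x)|\le \delta\int_{B_{1}}\bigl(1+|\ln|x-y||\bigr)\,\ud\mu(y),
\]
and Jensen's inequality for the convex function $e^{t}$ gives
\[
e^{p|v_{1}(x)|}\le\int_{B_{1}}e^{\delta}e^{\delta|\ln|x-y||}\,\ud\mu(y).
\]
For $0<t<2$ we have $e^{\delta|\ln t|}\le t^{-\delta}+2^{\delta}$. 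Integrating in $x$ and using Fubini,
\[
\int_{B_{1}}e^{p|v_{1}|}\,\ud x\le e^{\delta}\sup_{y\in B_{1}}\int_{B_{1}}\bigl(|x-y|^{-\delta}+2^{\delta}\bigr)\,\ud x<\infty,
\]
provided $\delta<n$. So choose $\varepsilon<n/(pC_{1})$ at the start.

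Combining Steps 2 and 3 gives $e^{pv}\le e^{p|v|}\in L^{1}(B_{1})$. The only delicate point is the order of choices: $\varepsilon$ must be fixed from $p$, $C_{1}$ and $n$ before $M$ is chosen, since the bound on $v_{2}$ is allowed to depend on $M$.
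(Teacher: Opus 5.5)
Your proof is correct and is essentially the paper's argument. Both split $F$ into a bounded truncation, whose potential is bounded in $L^\infty$, and a tail of small $L^1$ mass, handled by Jensen's inequality with the probability measure $F_{\mathrm{tail}}\,\ud y/\|F_{\mathrm{tail}}\|_1$. The differences are cosmetic: your labels $F_1,F_2$ are swapped relative to the paper's, and you require $pC_1\|F_{\mathrm{tail}}\|_1<n$ instead of the paper's $<n/2$, which is enough since $|x-y|^{-\delta}$ is integrable for $\delta<n$.
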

	\begin{remark}\label{Exponential integrability remark}
		Lemma \ref{Exponential integrability} does not require $F$ to be finite everywhere. Any non-negative $F\in L^{1}(B_{1})$ is admissible, even if $F$ has isolated singularities. Since $F$ is integrable, the potential
		\begin{equation*}
			v(x)=\int_{B_1} K(x,y)F(y) \ud y
		\end{equation*}
		is well defined almost everywhere in $B_{1}$, which is sufficient to get  $e^{pv}\in L^{1}(B_{1})$.
	\end{remark}
	\begin{proof}
		Since $F \ge 0$ and $F \in L^{1}(B_{1})$, for any $\varepsilon>0$, we may choose 
		$\lambda>0$ such that the truncation
		\begin{equation*}
			F_{1} =F\chi_{\left\{F<\lambda\right\}} \qquad \text{and}  \qquad 	F_{2} =F\chi_{\left\{F\geq\lambda\right\}}
		\end{equation*}
		satisfying $F_{1} \in L^{\infty}(B_{1})$ and $\|F_{2}\|_{L^{1}(B_{1})} <\varepsilon$, where cut-off function $\chi_{\Omega} $ is defined by
		\begin{equation*}
			\chi_{\Omega}=
			\begin{cases}
				\displaystyle 1, &  \quad\text{in } \Omega,\\[6pt]
				0, & \quad\text{in }\Omega^{c}.
			\end{cases}
		\end{equation*}
		Therefore, for any arbitrarily fixed $p>1$, there exists $\lambda_{p}>0$ such that $F_{1} \in L^{\infty}(B_{1})$ and $\|F_{2}\|_{L^{1}(B_{1})} < \frac{n}{2pC_{1}}$. Then the function $v$ admits a decomposition $v=v_{1}+v_{2}$, in which
		\begin{equation*}
			v_{i}(x)=\int_{B_{1}} K(x,y) F_{i}(y) \ud y \qquad i=1,2.
		\end{equation*}
		By the H\"older inequality, we have
		\begin{align*}
			\|v_{1}\|_{L^{\infty}(B_{1})} &\leq \|K(x,y)\|_{L^{1}(B_{1})} \|F_{1}\|_{L^{\infty}(B_{1})}\\
			& \leq C_{1}\|F_{1}\|_{L^{\infty}(B_{1})}\int_{B_{2}}\left(1+\left|\ln|y|\right|\right) \ud y \\
			& <\infty,
		\end{align*}
		which implies that $e^{pv_{1}} \in L^{1}(B_{1})$. On the other hand, using \eqref{kernel-bound} and Jensen's inequality, we have
		\begin{align*}
			\int_{B_{1}} e^{p|v_{2}(x)|} \ud x &\leq \int_{B_{1}} e^{pC_{1}\int_{B_{1}} \left(1+\left|\ln|x-y|\right|\right)F_{2}(y) \ud y} \ud x\\
			&\leq C \int_{B_{1}}e^{pC_{1}\|F_{2}\|_{1}\int_{B_{1}}\left(\left|\ln|x-y|\right|\frac{F_{2}(y)}{\|F_{2}\|_{1}}\right)\ud y} \ud x\\
			& \leq C \int_{B_{1}} \int_{B_{1}}\left(e^{\left|\ln|x-y|\right|}\right)^{\frac{n}{2}}\frac{F_{2}(y)}{\|F_{2}\|_{1}} \ud y \ud x\\
			& \leq C \int_{B_{1}} \left(\frac{1}{|y|}\right)^{\frac{n}{2}} \ud y + C \int_{B_{2} \setminus B_{1}} |y|^{\frac{n}{2}} \ud y\\
			& < \infty.
		\end{align*}
		Thus, we conclude that for any $p>1$, $e^{pv} \in L^{1}(B_{1})$. This completes the proof.
	\end{proof}
	Now we begin to prove Theorem~\ref{even result case}.
	\begin{proof}[Proof of Theorem~\ref{even result case}]
		We define
		\begin{equation*}
			b:=-c_{n}a_{0}>-n.
		\end{equation*}
		Since $a_{0}\phi(x)=-b\ln5+b\ln|x|$, by Lemma~\ref{even case expression}, we have
		\begin{equation}\label{even result case inequality1}
			u(x)=v(x)+h(x)+b\ln|x|.
		\end{equation}
		Here $v$ is given by \eqref{v(x)} and $h \in C^{\infty}(B_{1})$ is a solution of
		$(-\Delta)^{\frac{n}{2}}h =0$ in $B_{1}$, where we have absorbed the constant
		$-b\ln 5$ into the harmonic function $h$. Therefore, we know that $v$ is a nonnegative solution of
		\begin{equation*}
			\left(-\Delta\right)^{\frac{n}{2}}v=K(x)\left(\frac{1}{|x|^\alpha}*e^u\right)e^{u(x)} \quad \text{in }B_{1}
		\end{equation*}
		in the sense of distributions. Using $K \in L^{\infty}(B_{1})$ and assumption \eqref{second-order assumption}, we obtain that
		\begin{equation*}
			\int_{B_{1} \setminus \{0\}} K(x)\left(\frac{1}{|x|^\alpha}*e^u\right)e^{u(x)} \ud x <+\infty.
		\end{equation*}
		Since $c_{n} \ln \frac{5}{|x-y|}$ satisfies \eqref{kernel-bound}, then by Lemma~\ref{Exponential integrability} and Remark~\ref{Exponential integrability remark}, we have $e^{v} \in L^{p}(B_{1})$ for any $p>1$. 
		
		Next, we will prove that $v \in C^{\gamma}_{loc} (B_{1})$ for some $0<\gamma<1$.
			By \eqref{k(x)equation} and \eqref{even result case inequality1}, we know that $v$ is a non-negative solution of 
		\begin{align*}
			(-\Delta)^{\frac{n}{2}} v(x) &= K(x)\left(\frac{1}{|x|^\alpha}*e^u\right)e^{u(x)} \\
			&=K(x)\left(\int_{B_{1}\setminus\{0\}}\frac{e^{v(y)}e^{h(y)}|y|^{b}}{|x-y|^\alpha} \ud y\right)e^{v(x)}e^{h(x)}|x|^{b}   \quad \text{in } B_{1}
		\end{align*}
		in the sense of distributions. Proceeding in a manner similar to the proof of Theorem~\ref{secondorderresult}, one finds that, provides $\alpha \in (0,n)$ and $b>\frac{\alpha-2n}{2}$, it follows that
		\begin{equation*}
			K(x)\left(\int_{B_{1}\setminus\{0\}}\frac{e^{v(y)}e^{h(y)}|y|^{b}}{|x-y|^\alpha} \ud y\right)e^{v(x)}e^{h(x)}|x|^{b} \in L^{p_{0}}(B_{1})
		\end{equation*}
		for some $p_{0}>1$. A subsequent application of the standard $W^{n,p}$ elliptic regularity theory yields $v\in W^{n,p_{0}}_{loc}(B_{1})$. It then follows from the Sobolev embedding theorem that $v \in C^{\gamma}_{loc}(B_{1})$ for some $0<\gamma<1$. 
	Thus, by  \eqref{even result case inequality1}, we have
		\begin{equation*}
			u(x)=\varphi(x)+b\ln|x|  \quad \text{in }B_{1} \setminus\left\{0\right\},
		\end{equation*}
		in which $\varphi:=v+h \in C^{\gamma}_{loc}(B_{1})$ for some $0<\gamma<1$. We finish the proof.
	\end{proof}
	\section{Asymptotic behavior at isolated singularities in odd dimension case} 
	In this section, we study the isolated singularity problem for the Hartree-type equation \eqref{k(x)equation} with odd dimensions $n \ge 3$. In this case, the operator $(-\Delta)^{\frac{n}{2}}$ is a nonlocal operator, and we mainly consider the distributional solutions defined in the space $u\in\mathcal{L}_{\frac{n}{2}}(\mathbb{R}^{n})$. Under the finite total curvature condition \eqref{second-order assumption} and the decay condition \eqref{o(r^{n-2})}, we also show that the asymptotic behavior near the origin has the same form as in the even-dimensional case. The following lemma provide the  corresponding representation formula for the solutions to \eqref{k(x)equation} in odd dimensions.
	
	\begin{lemma}
		Let $n\geq 3$ be an odd integer and $\alpha>0$. Suppose that $u \in \mathcal{L}_{\frac{n}{2}} (\mathbb{R}^{n}) \cap C^{n} \left(B_{1} \setminus \left\{0\right\}\right)$ is a solution of \eqref{k(x)equation} satisfying condition \eqref{second-order assumption} and \eqref{o(r^{n-2})}, and $K \in L^{\infty}(B_{1})$ is non-negative. Then $u$ decomposes as
		\begin{equation*}
			u(x)=v(x)+h(x)+b\ln|x| \quad \text{for }x \in B_{1} \setminus\left\{0\right\},
		\end{equation*}
		where $v$ is defined by \eqref{v(x)}, $h \in C^{\infty}(B_{1})$ is a polyharmonic function in $B_{1}$ satisfying $(-\Delta)^{\frac{n}{2}}h =0$, and the constant $b$ satisfies $b>-n$.
	\end{lemma}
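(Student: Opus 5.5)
The plan is to follow the even-dimensional argument (Proposition~\ref{representation formula} followed by Lemma~\ref{even case expression}), replacing the local representation theorem with its odd-dimensional, distributional counterpart from \cite{Yang-Yang}.

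\textbf{Step 1: positive part of $u$ is integrable.} As in the proof of Proposition~\ref{representation formula}, we use $u^{+}\le e^{u}$ and $|x-y|^{-\alpha}\ge 2^{-\alpha}$ on $B_1\times B_1$. Together with \eqref{second-order assumption}, this gives $u^{+}\in L^{1}(B_{1})$.

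\textbf{Step 2: representation formula.} Since $u\in\mathcal{L}_{\frac n2}(\mathbb{R}^n)$ is a distributional solution, we can apply \cite[Theorem 1.4]{Yang-Yang} with $s=0$ in its odd-dimensional version. This should yield
\[
u=v+h+\sum_{|\beta|\le n-1}a_\beta D^\beta\phi \quad \text{in } B_1\setminus\{0\},
\]
with $\phi=c_n\ln\frac{5}{|x|}$ the fundamental solution of $(-\Delta)^{\frac n2}$ and $v$ given by \eqref{v(x)}. Here $h\in\mathcal{L}_{\frac n2}(\mathbb{R}^n)$ solves $(-\Delta)^{\frac n2}h=0$ in $B_1$ in the distributional sense.

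The nonlocal structure forces some care. The difference $w:=u-v-\sum a_\beta D^\beta\phi$ is $(-\Delta)^{\frac n2}$-harmonic in $B_1$ as a distribution: the singular part supported at $\{0\}$ has been removed by the finite sum of derivatives of $\delta_0$. Smoothness $h\in C^\infty(B_1)$ then follows from interior regularity for $s$-harmonic functions in $\mathcal{L}_{\frac n2}$, for instance via the Poisson kernel representation of the fractional Laplacian on balls. I expect this step to be the main obstacle. Showing that the distribution $(-\Delta)^{\frac n2}u-K\,(|x|^{-\alpha}*e^u)e^u$ is supported at the origin, and hence is a finite combination $\sum c_\beta D^\beta\delta_0$ of order at most $n-1$, requires a bound on its order. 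That bound should come from $u^{+}\in L^1$ together with the equation, as in \cite{Yang-Yang}. I would cite that result rather than reprove it.

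\textbf{Step 3: eliminating higher-order terms.} The argument is identical to Lemma~\ref{even case expression}. By Fubini, $\int_{B_r}v\le Cr^n\ln\frac1r$, and $h$ is bounded near $0$. Combined with \eqref{o(r^{n-2})}, this gives $\int_{B_r}|\sum a_\beta D^\beta\phi|=o(r^{n-2})$. The top-order group is $P_{n-1}(x)/|x|^{2n-2}$. On a cone over a spherical cap where $|P_{n-1}|\ge d_0$, its integral is at least $d_1 r$, while the lower-order terms contribute at most $Cr^2$. Hence $P_{n-1}\equiv0$. Iterating down to order $2$ removes all terms with $|\beta|\ge2$; at order $j\ge2$ the comparison is $r^{n-j}$ against $o(r^{n-2})$.

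\textbf{Step 4: first-order terms and the sign of $b$.} Next we restrict \eqref{second-order assumption} to $B_{1/2}\times B_{1/2}$ and use $e^{v}\ge1$, $e^{h}\ge C_1>0$ and $|x-y|^{-\alpha}\ge1$. This gives
\[
\int_{B_{1/2}}(5/|x|)^{c_na_0}e^{-c_n\sum_i a_i x_i/|x|^2}\,dx<\infty .
\]
Restricting to cones around $\pm e_i$ (the $n$-dimensional analogue of the sectors $D_1,\dots,D_4$), where $\int_0 r^{a}e^{c/r}dr=\infty$ for $c>0$, forces every $a_i=0$. Then $\int_{B_{1/2}}|x|^{-c_na_0}<\infty$ gives $c_na_0<n$. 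Setting $b:=-c_na_0>-n$ and absorbing the constant $-b\ln5$ into $h$ completes the decomposition.
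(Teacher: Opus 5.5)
Your Steps 3 and 4 are exactly what the paper does. The paper just says ``arguing as in the proof of Lemma~\ref{even case expression}'', and both steps carry over verbatim to odd $n$.

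The weak point is Step 2, which is the only genuinely new ingredient in odd dimensions. You rest it on an ``odd-dimensional version'' of \cite[Theorem 1.4]{Yang-Yang} whose existence and hypotheses you have not checked. The mechanism you suggest for capping the order, $u^+\in L^1$ together with the equation, is also not the right one here. For odd $n$, away from the origin, $\Delta^j\phi=C_j|x|^{-2j}$ with $C_j\neq 0$, because none of the factors $n-2-2i$ vanish. So for $2j\ge n$, a term $a\,\Delta^j\phi$ with the appropriate sign of $a$ is non-positive and non-integrable near $0$. Adding such a term destroys neither $u^+\in L^1$ nor \eqref{second-order assumption}. Hence a one-sided bound cannot cap the order in odd dimensions. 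In even dimensions these radial terms vanish off the origin, which is why $u^+\in L^1$ can do the job there.

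What actually caps the order is $u\in L^1_{loc}$, which is built into the hypothesis $u\in\mathcal{L}_{\frac n2}(\mathbb{R}^n)$. With it, the step takes a few lines, and this is how the paper argues:
\begin{enumerate}
\item Since $v\in\mathcal{L}_{\frac n2}$ solves the equation in $\mathcal{D}'(B_1)$, the distribution $(-\Delta)^{\frac n2}(u-v)$ vanishes on $C_c^\infty(B_1\setminus\{0\})$, so it is supported in $\{0\}$.
\item Schwartz's theorem then gives $(-\Delta)^{\frac n2}(u-v)=\sum_{|\beta|\le M}a_\beta D^\beta\delta_0$ for some finite $M$. Finite order is automatic for point-supported distributions, so there is no order bound to prove at this stage.
\item Inverting with $(-\Delta)^{\frac n2}\phi=\delta_0$ gives $u-v=h+\sum_{|\beta|\le M}a_\beta D^\beta\phi$ with $(-\Delta)^{\frac n2}h=0$ in $B_1$, so $h\in C^\infty(B_1)$.
\item Since $u,v,h\in L^1_{loc}(B_1)$ while $D^\beta\phi\sim|x|^{-|\beta|}$, every term with $|\beta|\ge n$ must vanish.
\end{enumerate}
So your Step 1 is superfluous in odd dimensions, and no external representation theorem is needed. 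With this argument in place of Step 2, your proof is complete.
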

	\begin{proof}
		Let $v$ be defined as in \eqref{v(x)}. Standard potential estimates imply 
		$v\in \mathcal{L}_{\frac{n}{2}}(\mathbb{R}^n)$, and 
		\[
		(-\Delta)^{\frac{n}{2}}v(x)=K(x)\left(\frac{1}{|x|^\alpha}*e^u\right)e^{u(x)}\qquad\text{in }\mathcal D'(B_1).
		\]
		Set \(w=u-v\). Then
		\[
		(-\Delta)^{\frac n2}w=0 \qquad\text{in } B_1\setminus\{0\}.
		\]
		Hence, for any \(\psi\in C_c^\infty(B_1\setminus\{0\})\), we have
		\[
		\langle(-\Delta)^{\frac n2}w,\psi\rangle=0,
		\]
		which implies that
		\[
		\mathrm{supp}\big((-\Delta)^{\frac n2}w\big)\subset\{0\}.
		\]
		By the classical structure theorem of Schwartz, we know that
		\[
		(-\Delta)^{\frac n2}w=\sum_{|\beta|\leq M}a_\beta D^\beta\delta_0,
		\qquad a_\beta\in\mathbb{R}.
		\]
		Using \( (-\Delta)^{\frac n2}\phi=\delta_0 \), we have
		\[
		w(x)
		=h(x)+\sum_{|\beta|\leq M}a_\beta D^\beta\phi(x),
		\]
		where \(h\) satisfies \( (-\Delta)^{\frac n2}h=0 \) in \(B_1\) and $\phi(x) =c_{n}\ln \frac{5}{|x|}$ is a fundamental solution of $(-\Delta)^{\frac{n}{2}}$. By the elliptic regularity, we have \( h\in C^\infty(B_1) \). Since \(u,v,h\in L^1_{\mathrm{loc}}(B_1)\) and 
		\(D^\beta\phi(x)\sim |x|^{-|\beta|}\),
		the local integrability near \(0\)  requires \( |\beta|<n \).
		Thus \(a_\beta=0\) whenever $ |\beta|\geq n $, and $M\leq n-1$. Therefore, we conclude that
		\begin{equation*}
			u(x)=v(x)+h(x)+\sum_{|\beta|\leq n-1} a_{\beta}D^{\beta}\phi(x)  \quad\text{for }x\in B_{1} \setminus\{0\}.
		\end{equation*}
		
		Arguing as in the proof of Lemma~\ref{even case expression}, we obtain that 
		\begin{equation*}
			u(x)=v(x)+h(x)+ b\ln|x|  \quad\text{for }x\in B_{1} \setminus\{0\},
		\end{equation*}
		where $b>-n$ and $h$ has been redefined by absorbing a constant into the harmonic part. 
	\end{proof}
	Now we discuss the regularity of $v$ and prove Theorem~\ref{odd case result}.
	\begin{proof}[Proof of Theorem~\ref{odd case result}]
		Since $c_{n} \ln\frac{5}{|x-y|}$ satisfies \eqref{kernel-bound}, then by Lemma~\ref{Exponential integrability} and Remark~\ref{Exponential integrability remark}, we have $e^{pv} \in L^{1}(B_{1})$ for any $p>1$. By \eqref{k(x)equation} and the expression of $u$, we know that $v$ is a non-negative solution of 
		\begin{align*}
			(-\Delta)^{\frac{n}{2}} v(x) &= K(x)\left(\frac{1}{|x|^\alpha}*e^u\right)e^{u(x)} \\
			&=K(x)\left(\int_{B_{1}\setminus\{0\}}\frac{e^{v(y)}e^{h(y)}|y|^{b}}{|x-y|^\alpha} \ud y\right)e^{v(x)}e^{h(x)}|x|^{b}   \quad \text{in } B_{1}
		\end{align*}
		in the sense of distributions. An argument analogous to the proof of Theorem~\ref{secondorderresult} shows that, provided $\alpha \in (0,n)$ and $b>\frac{\alpha-2n}{2}$, we have
		\begin{equation*}
			K(x)\left(\int_{B_{1}\setminus\{0\}}\frac{e^{v(y)}e^{h(y)}|y|^{b}}{|x-y|^\alpha} \ud y\right)e^{v(x)}e^{h(x)}|x|^{b} \in L^{p_{0}}(B_{1})
		\end{equation*}
		for some $p_{0}>1$. By the expression of $v$ and the classical potential estimate (see, e.g., Gilbarg-Trudinger \cite{Gilbarg-Trudinger}), we obtain that $v \in C^{\gamma}_{loc}(B_{1})$ for some $0<\gamma<1$. Consequently, we get
		\begin{equation*}
			u(x)=\varphi(x)+b\ln|x|  \quad \text{in }B_{1} \setminus\left\{0\right\},
		\end{equation*}
		in which $\varphi:=v+h \in C^{\gamma}_{loc}(B_{1})$ for some $0<\gamma<1$.
	\end{proof}

\medskip
\medskip
\medskip
\textbf{Declaration}: {The authors declared that they have no conflict of interest.}

\end{document}